\title{\bf Global 
Stabilization of Chemostats 
\\ 
with Nonzero Mortality and Substrate Dynamics
}
\author{Iasson Karafyllis, Epiphane Loko\footnote{Universite Paris-Saclay, CNRS, CentraleSupélec, Laboratoire des signaux et systemes, 91190, Gif-sur-Yvette, France. \texttt{dagbegnon-epiphane.loko@centralesupelec.fr}}, Antoine Chaillet and Miroslav Krsti{\'c}}
\date{\today}
\definecolor{darkpastelgreen}{rgb}{0.01, 0.75, 0.24}
\begin{document}
\title{Global Stabilization of Chemostats with\\ Nonzero Mortality and Substrate Dynamics}
\author{Iasson Karafyllis,  Epiphane Loko\footnote{Corresponding author: Universite Paris-Saclay, CentraleSupélec, Laboratoire des signaux et systemes, 91190, Gif-sur-Yvette, France. \texttt{dagbegnon-epiphane.loko@centralesupelec.fr}}, Miroslav Krsti\'c, and Antoine Chaillet  
}

\maketitle

\textbf{Abstract:} In ``chemostat''-type population models that incorporate substrate (nutrient) dynamics, the dependence of the birth (or growth) rate on the substrate concentration introduces nonlinear coupling that creates a challenge for stabilization that is global, namely, for all positive concentrations of the biomass and nutrients. This challenge for global stabilization has been overcome in the literature using relatively simple feedback when natural mortality of the biomass is absent. However, under natural mortality, it takes fortified, more complex feedback, outside of the existing nonlinear control design toolbox, to avoid biomass extinction from nutrient-depleted initial conditions. Such fortified feedback, the associated control Laypunov function design, and Lyapunov analysis of global stability are provided in this paper. We achieve global stabilization for two different chemostat models: (i) a lumped model, with two state variables, and (ii) a three-state model derived from an age-structured infinite-dimensional model. The proposed feedback stabilizers are explicit, applicable to both the lumped and the age-structured models, and coincide with simple feedback laws proposed in the literature when the mortality rate is zero. Global stabilization means subject to constraints:  all positive biomass and nutrient concentrations are within the region of attraction of the desired equilibrium, and, additionally, this is achieved with a dilution input that is guaranteed to remain positive. For the lumped case with Haldane kinetics, we show that the reproduction rate dominating the mortality (excluding the reproduction and mortality being in balance) is not only sufficient but also necessary for global stabilization. The obtained results are illustrated with simple examples.

%

\noindent \textbf{Keywords}
age-structured chemostat, chemostat, feedback stabilization, Haldane kinetics,  mortality rate.

\section{Introduction}
\paragraph{Motivation.}
	The chemostat is a well-known model of continuous microbial bioreactors (see \cite{Harmand2017-chemostat}, \cite{Smith1995-Theory}, \cite{Dochain2013-Automatic}, \cite{Bastin2013-Line}). Chemostat models have been used in several environmental technology and ecological processes, such as the wastewater treatment to degrade pollutant and the production of biomass \cite{Veldkamp1977-ecological}. The chemostat has been studied both as a dynamical system and as a control system. In the framework of the chemostat as a dynamical system, several studies exist in the literature.  In \cite{Mazenc2024-stability} for instance the authors consider one-species delayed chemostat model with a periodic input of a single substrate and they provide condition under which the global asymptotic stability of the non-trivial periodic solution can be achieved (with small delay) (see also \cite{Amster2020-dynamics}, \cite{Ye2022-periodic}, \cite{Ye2023-dynamical} for the existence and uniqueness of periodic solution of this model). In \cite{Butler1985-Mathematical}, \cite{Wolkowicz1992-Global} and \cite{Mazenc2012-stability}, the authors consider a chemostat model with multiple competitive species and they provide conditions on the specific growth rate function to ensure the global asymptotic stability (see also \cite{Wang2006-Delayed} for the same model with delay). A coexistence constraint on the input of the same multiple species chemostat model 
    is presented in \cite{Robledo2012-global} where the authors identify the globally asymptotically stable equilibrium.  
    In \cite{RAHAHA}, the impact of the presence of a buffer tank on the concentration equilibria and their stability is assessed.
    In \cite{Beauthier2015-Input} the authors solve the problem of coexistence of species in competition in chemostat by using an LQ-optimal control. A single-species  age-structured chemostat model is studied in \cite{Toth2006-Limit} where the authors provide conditions for the existence of limit cycles. A stability result is also investigated in  \cite{Borisov2022-stability} for a chemostat model which describes the biodegradation of the binary mixture of phenol and sodium salicylate. It is also shown in \cite{Harmand2020-Increasing} that increasing dilution rate can globally stabilize chemostat model like two-step mass-balance
    biological process (see \cite{Harmand2006-Dynamical} for a similar model presented as autocatalytic system).  
    
\paragraph{Literature on chemostat control.}  The chemostat as a control system has attracted the attention of many researchers in the literature. In \cite{Karafyllis2009-Relaxed}, \cite{Karafyllis2008-Vector} for instance, the robust global feedback stabilzation for a chemostat model with uncertainty is studied (see also \cite{Mailleret2001-A-simple} for the same model without uncertainty). In the context of global stabilization of the chemostat model with delays, a positive-valued feedback is constructed in \cite{Karafyllis2012-New} based on a new small gain method and also in \cite{Mazenc2010-Stabilization}, \cite{Mazenc2010-Stabilization-2-species} using Lyapunov-Krasovskii arguments.  In \cite{Gouze2006-feedback} the authors extend the feedback controller proposed in \cite{De2003-Feedback} to a chemostat model with two species (see also \cite{Dimitrova2012-Nonlinear} for two species chemostat model). In \cite{Borisov2020-Global}, a piecewise constant feedback is constructed to ensure the asymptotic stabilization of a chemostat model which describes the methane fermentation of organic wastes in a continuously stirred tank bioreactor. A particular class of chemostat models which is investigated in the literature is the age-structure chemostat which describes the evolution of a certain population by taking into account both the age and the time (see \cite{Brauer2012-Mathematical}, \cite{Boucekkine2011-Optimal},\cite{Rundnicki1994-Asymptotic}). There are several studies on feedback stabilization problems for age-structured chemostat models such as  \cite{Karafyllis2017-Stability}, \cite{Schmidt2018-Yield},  \cite{Haacker2024-stabilization}, \cite{Veil2024-Stabilization} (which make use of Control Lyapunov functionals) and also extensive studies in optimal control: see \cite{Boucekkine2011-Optimal}, \cite{Feichtinger2003-Optimality}, \cite{Sun2014-Optimal}.
    
\paragraph{Contribution.}    
In chemostat models that incorporate substrate (nutrient) dynamics, the dependence of the birth (or growth) rate on the substrate concentration introduces multiplicative influence of the substrate in the biomass dynamics, which creates a challenge for {global stabilization}, namely {valid} for all positive concentrations of the biomass and nutrients. This challenge for global stabilization has been overcome in the literature using relatively simple feedback when natural mortality of the biomass is absent. However, under natural mortality, it takes fortified, more complex feedback, outside of the existing nonlinear control design toolbox, to avoid biomass extinction from nutrient-depleted initial conditions. Such fortified feedback, the associated design of a  strict control Laypunov function, and Lyapunov analysis of global stability are provided in this paper. Importantly, the fortification of feedback is with a nonlinear term whose (positive) gain can be taken arbitrarily small, hence, the ``fortification'' to obtain globality is subtle, rather than brute force. 

We achieve global stabilization for two different chemostat models: (i) lumped model, with two state variables, and (ii) a three-state model which is derived by an age-structured model, infinite-dimensional model. What is most interesting is that, in addition to being given by explicit formulae, the proposed feedback laws (a) are the same for both the lumped and the age-structured case, and (b) coincide with the simple feedback law proposed in \cite{Mailleret2001-A-simple,Karafyllis2008-Vector} when the mortality rate is zero. 

    From a control-theoretic point of view, the control study that is performed in the present work is challenging because: 1) the equilibrium points that are stabilized can be open-loop unstable, 2) all state and input constraints (positivity of the state and input) are taken into account. Our feedback design methodology provides and uses Control Lyapunov Functions (CLFs) for the chemostat models which can also be used for further robustness analysis. The Lyapunov function design methodology is combined with appropriate changes of coordinates that allow us to study the feedback stabilization problem in standard linear spaces. 

Our study is not only of interest in the control theoretic sense but also in the context of mathematical biology.  We show, in particular, for the lumped case with Haldane kinetics for the specific growth rate (but also for any other case where the kinetic equation for the specific growth rate is given by a concave function), that the  condition
that 
\begin{center}
    ``the specific growth rate calculated at the inlet value of the substrate  concentration is greater than the mortality rate"
    \end{center}
\noindent is not only sufficient for the existence of a globally stabilizing feedback, but, when strictly violated, makes global stabilization impossible.
    
    The above sufficient condition seems to have physical meaning and to be reflecting the biological laws of the chemostat model. The study of a nonlinear control system for which one can give necessary and sufficient conditions for the existence of global feedback stabilizers is rare in the literature. For the age-structured case, we also show that the proposed sufficient condition for the existence of a global feedback stabilizer becomes identical to the above condition for certain limiting cases. 

\paragraph{Organization.} Section \ref{Sec-Simplest} is devoted to the simple lumped chemostat model with non-zero mortality rate. Section \ref{Sec-Age} presents the age-structured model and the feedback stabilization result. In Section \ref{Sec-Simulation} we illustrate our main results by means of simple examples and in Section \ref{Sec-Proofs}, we provide the proofs of our main results. Finally, in Section \ref{Conclusions} we give the concluding remarks of the present work. 

\vspace{3mm}
\emph{Acronyms and notations.} 
$\mathbb{R}$ stands for the set of all real numbers, and for any $n\in \mathbb{N}\setminus\{0\},$ $\mathbb{R}^n$ denotes the $n-$dimensional real space. $\mathbb{R}_+$ stands for the non-negative real numbers. $C^1(\mathbb{R}_+)$ is the set of continuously differentiable function $f:\mathbb{R_+}\rightarrow\mathbb{R}_+.$ Given any continuously differentiable function $V:\mathbb{R}^n\rightarrow\mathbb{R},$ $\dot{V}(x)$ denotes its derivative in times which is $\dot{V}(x):=\nabla V(x) \dot{x}$ where $\nabla V(x):=\left(\frac{\partial V}{\partial x_1}(x),\cdots, \frac{\partial V}{\partial x_n}(x)\right)$ is the gradient of $V$. PDE: Partial Differential Equation, ODE: Ordinary Differential Equation.

\section{A Simple Chemostat Model with Mortality}\label{Sec-Simplest}

Consider the chemostat model:
    \begin{align}
    \begin{cases}
        \dot{X}=(p_0\mu(S)-b-D)X\\
        \dot{S}=D(S_{\text{in}}-S)-\mu(S)X,
    \end{cases}\label{system-simple1}
    \end{align}
\noindent where $X(t)>0$ is the concentration of the microbial population in the chemostat at time $t,$ $S(t)>0$ is the limiting substrate concentration, $S_{\text{in}}>0$ is the inlet concentration of the substrate, $D(t)>0$ is the dilution rate, $p_0>0$ is a constant (a yield coefficient), $\mu$ is the specific growth rate function and $b\geq 0$ is the mortality rate. The function $\mu: \mathbb{R}_+\rightarrow \mathbb{R}_+$ is a bounded, $C^1(\mathbb{R}_+)$ function with $\mu(0)=0$ and $\mu(S)>0$ for $S>0.$ 

We consider model \eqref{system-simple1} as a control system with input $D>0$ and state space the open set $(0,+\infty)\times (0,S_{\text{in}}),$ i.e, we consider \eqref{system-simple1} with
\begin{align*}
    (X,S)\in (0,+\infty)\times (0,S_{\text{in}}).
\end{align*}
We suppose that, for some constant $D^*>0$, there exists an equilibrium point when $D(t)\equiv D^*$, i.e a point $(X^*,S^*)\in (0,+\infty)\times (0,S_{\text{in}})$ satisfying
\begin{align}
    &p_0\mu(S^*)=b+D^*,\quad X^*=\frac{D^*(S_{\text{in}}-S^*)}{\mu(S^*)}.\label{eqpt-mu1}
\end{align}
Our objective in this section is to stabilize globally this equilibrium point by means of a locally Lipschitz feedback law. In order to do that, we first use the following diffeomorphism
\begin{align}
    x_1=\ln\left(\frac{\mu(S^*)X}{D^*(S_{\text{in}}-S)}\right),\quad 
    x_2=\ln\left(\frac{S(S_{\text{in}}-S^*)}{S^*(S_{\text{in}}-S)}\right)\label{x_1-x_2}
\end{align} with inverse
   \begin{align}
    X=\frac{D^*S_{\text{in}}(S_{\text{in}}-S^*)e^{x_1}}{\mu(S^*)\left(S_{\text{in}}-S^*+S^*e^{x_2}\right)},\quad
    S=\frac{S^*S_{\text{in}}e^{x_2}}{S_{\text{in}}-S^*+S^*e^{x_2}} \label{X-S-1}.
\end{align} 
This diffeormorphism maps $(0,+\infty)\times (0,S_{\text{in}})$ onto $\mathbb{R}^2$ and the equilibrium point $(X^*,S^*)\in (0,+\infty)\times (0,S_{\text{in}})$ to $0\in \mathbb{R}^2.$ Therefore we transform the chemostat model \eqref{system-simple1} to the following control system
\begin{subequations}
    \begin{align}
    &\dot{x}_1=b(g(x_2)-1)+D^*g(x_2)(1-e^{x_1})\label{x_1-simple}\\
    &\dot{x}_2=p(x_2)(D-D^*g(x_2)e^{x_1})\label{x_2-simple}\\
    &x=(x_1,x_2)^{\top}\in \mathbb{R}^2, D>0,
\end{align}\label{systeme-simple2}
\end{subequations} where for all $x_2\in \mathbb{R}$
\begin{subequations}
    \begin{align}
    g(x_2)&:=\frac{1}{\mu(S^*)}\mu\left(\frac{S_{\text{in}}}{p(x_2)}\right)\label{g}\\
    p(x_2)&:=\kappa e^{-x_2}+1\label{p}\\
    \kappa&:=\frac{S_{\text{in}}-S^*}{S^*}>0.\label{kappa}
\end{align}\label{g-p-kappa}
\end{subequations}
The stability of the equilibrium can easily be characterized, as stated next.

\begin{fact}\label{fact1}
The equilibrium point $(X^*,S^*)\in (0,+\infty)\times (0,S_{\text{in}})$ of \eqref{systeme-simple2} is unstable when $\mu'(S^*)<0$.
\end{fact}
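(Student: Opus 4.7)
The plan is to prove Fact~\ref{fact1} by a direct linearization argument. Since the map \eqref{x_1-x_2} is a diffeomorphism, stability of $0\in\mathbb{R}^2$ for \eqref{systeme-simple2} with $D\equiv D^*$ is equivalent to stability of $(X^*,S^*)$ for \eqref{system-simple1} with $D\equiv D^*$, so I would work in the simpler, original coordinates.

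First I would write down the Jacobian of \eqref{system-simple1} at $(X^*,S^*)$ with $D=D^*$:
\begin{equation*}
J=\begin{pmatrix} p_0\mu(S^*)-b-D^* & p_0\mu'(S^*)X^* \\ -\mu(S^*) & -D^*-\mu'(S^*)X^* \end{pmatrix}.
\end{equation*}
The equilibrium condition \eqref{eqpt-mu1} kills the $(1,1)$ entry, leaving a matrix whose determinant reduces to
\begin{equation*}
\det J \,=\, p_0\,\mu(S^*)\,\mu'(S^*)\,X^*.
\end{equation*}
Since $\mu(S^*)>0$, $X^*>0$ and $p_0>0$, this determinant has the same sign as $\mu'(S^*)$.

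Next, under the hypothesis $\mu'(S^*)<0$, I would conclude that $\det J<0$, and so $J$ has two real eigenvalues of opposite sign. In particular, $J$ admits a positive eigenvalue, i.e.\ $(X^*,S^*)$ is a hyperbolic saddle of the closed-loop system \eqref{system-simple1} obtained with the constant input $D\equiv D^*$. By the unstable manifold theorem (or simply by linearized instability, e.g.\ Chetaev), this yields instability of $(X^*,S^*)$, which transfers through the diffeomorphism \eqref{x_1-x_2} to instability of the origin of \eqref{systeme-simple2}.

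I do not expect a genuine obstacle here: the proof is a one-line determinant computation once the equilibrium relation $p_0\mu(S^*)=b+D^*$ is used to eliminate the $(1,1)$ entry of $J$. The only minor point worth flagging is that the statement refers to the transformed system \eqref{systeme-simple2}, so I would briefly justify, as above, that the claim is equivalent to instability in the $(X,S)$ coordinates because the change of variables is a $C^1$-diffeomorphism.
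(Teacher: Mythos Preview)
Your proposal is correct and is essentially the same linearization argument as the paper's own proof: both compute the Jacobian at the equilibrium and observe that the constant term of the characteristic polynomial (equivalently, the determinant) has the sign of $\mu'(S^*)$, yielding a positive eigenvalue when $\mu'(S^*)<0$. The only cosmetic difference is that the paper linearizes the transformed system \eqref{systeme-simple2} at $0$ and reads off the sign of $g'(0)$, whereas you linearize \eqref{system-simple1} in the original $(X,S)$ coordinates; since the change of variables \eqref{x_1-x_2} is a $C^1$-diffeomorphism, the two computations are equivalent.
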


\begin{proof}
The linearization of \eqref{systeme-simple2} at $0\in \mathbb{R}^2$ is given by
\begin{align}
    \begin{pmatrix}
        \dot{x}_1\\
        \dot{x}_2
    \end{pmatrix}=\begin{pmatrix}
        &-D^*g(0)\quad &bg'(0)\\
        &-D^*p(0)\quad &-D^*p(0)g'(0)
    \end{pmatrix}\begin{pmatrix}
        x_1\\
        x_2
    \end{pmatrix}.\label{linear1}
\end{align} 
The characteristic polynomial of the matrix involved in \eqref{linear1} reads
\begin{align*}
    f(s)=s^2+D^*(g(0)+p(0)g'(0))s+D^*p(0)g'(0)(b+D^*g(0)).
\end{align*}
 Thus, we can see that the linearization \eqref{linear1} of \eqref{systeme-simple2} is unstable when $g'(0)<0.$ Equivalently, due to definitions \eqref{g}, \eqref{p}, the equilibrium point $(X^*,S^*)\in (0,+\infty)\times (0,S_{\text{in}})$ is unstable when $\mu'(S^*)<0.$
 \end{proof}

We solve next the feedback stabilization problem for the chemostat model \eqref{system-simple1} by studying the equivalent system \eqref{systeme-simple2} under the following assumption.\\ \\
\textbf{(A)} For all $S\in [S^*,S_{\text{in}}],$ the following inequality holds
\begin{align}
    p_0\mu(S)>b.\label{Assumption-A}
\end{align}
Assumption \textbf{(A)} was already used in \cite{Karafyllis2012-New} for a time-delay chemostat model and imposes a limitation on the mortality rate $b$.  We discuss the need of imposing assumption \textbf{(A)} below. Under assumption \textbf{(A)} we have the following result, whose proof is provided in Section \ref{Proof-Th1}. 

\begin{theorem}\label{Th1}
    Consider system \eqref{systeme-simple2} under assumption \textbf{(A)}. Then for every $\delta>0$ and $\alpha \in [0,1)$ the feedback law defined as
    \begin{align}
        D(x)=&(\kappa+1)D^*g(x_2)\frac{e^{x_1-x_2}}{p(x_2)}+ \delta b \begin{cases}
            |g(x_2)-1|^{1+\alpha}, \quad \text{if}\quad x_2\leq 0\\
            0, \quad \text{if}\quad x_2>0
        \end{cases}\label{Feedback1}
    \end{align} 
    is locally Lipschitz and achieves global asymptotic stabilization of $0\in \mathbb{R}^2$. Moreover, if $\alpha >0$ then the feedback law \eqref{Feedback1} also achieves local exponential stabilization of $0\in \mathbb{R}^2$.  
\end{theorem}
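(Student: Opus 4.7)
The plan is to build a strict control Lyapunov function for the closed loop and deduce global asymptotic stability from standard Lyapunov arguments. I would first verify admissibility of the feedback: $D(x)>0$ is immediate from \eqref{Feedback1}, and local Lipschitzness reduces to checking the junction at $x_2=0$, where $g(0)=1$ together with $g\in C^1$ ensures $|g(x_2)-1|^{1+\alpha}=O(|x_2|^{1+\alpha})$, so the piecewise definition glues in a locally Lipschitz manner for every $\alpha\in[0,1)$. Substituting $D=D(x)$ into \eqref{systeme-simple2}, the $x_1$-equation is $D$-free, and the identity $(\kappa+1)e^{-x_2}-p(x_2)=e^{-x_2}-1$ collapses the $x_2$-equation, giving the closed loop
$$\dot{x}_1 = -D^*g(x_2)(e^{x_1}-1)+b(g(x_2)-1),\qquad \dot{x}_2 = -D^*g(x_2)e^{x_1}(1-e^{-x_2})+p(x_2)\delta b\,\phi(x_2),$$
with $\phi(x_2):=|g(x_2)-1|^{1+\alpha}$ for $x_2\leq 0$ and $0$ otherwise. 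Conceptually, the open-loop part of $\dot{x}_2$ always pulls $x_2$ toward $0$, but with a strength that scales like $e^{x_1}$; the role of the fortification is precisely to retain a positive drive on $\{x_2<0\}$ when $e^{x_1}$ is small, thereby preventing escape toward $x_2=-\infty$.

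As a natural CLF candidate I would take $V(x)=(e^{x_1}-1-x_1)+\lambda\,(e^{-x_2}-1+x_2)$, which is $C^1$, positive definite, and radially unbounded on $\mathbb{R}^2$. Its derivative along the closed loop is
\begin{align*}
\dot V &= -D^*g(x_2)(e^{x_1}-1)^2 + b(e^{x_1}-1)(g(x_2)-1) \\ &\quad - \lambda D^*g(x_2)e^{x_1}(1-e^{-x_2})^2 + \lambda(1-e^{-x_2})p(x_2)\delta b\,\phi(x_2).
\end{align*}
The last summand is automatically $\leq 0$ since, by design, $\phi$ is supported on $\{x_2\leq 0\}$, where $1-e^{-x_2}\leq 0$. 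The main obstacle is to absorb the sign-indefinite cross term $b(e^{x_1}-1)(g(x_2)-1)$ into the remaining negative contributions. This is where the two hypotheses of the theorem play complementary roles: on $\{x_2\geq 0\}$, assumption \textbf{(A)} yields the uniform lower bound $g(x_2)\geq b/(b+D^*)$, so the Young estimate $|b(e^{x_1}-1)(g-1)|\leq \tfrac12 D^*g(e^{x_1}-1)^2 + \tfrac{b^2}{2D^*g}(g-1)^2$ is non-singular, and the residual $(g-1)^2$ is controlled through the $C^1$ comparison of $g-1$ with $1-e^{-x_2}$ near $0$ combined with boundedness of $\mu$; on $\{x_2\leq 0\}$, the fortification contributes a strictly negative term of order $|g-1|^{1+\alpha}$ which dominates $(g-1)^2$ via the bound $(g-1)^2\leq C|g-1|^{1+\alpha}$ (valid because $g$ is bounded and $\alpha<1$). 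A careful tuning of $\lambda$ against $\delta$, $b$, $D^*$, and the bounds on $g$ is expected to yield $\dot V\leq -W(x)$ for a continuous $W\geq 0$ vanishing only at $x=0$.

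Once this is established, global asymptotic stability of $0\in\mathbb{R}^2$ follows by Barbashin--Krasovskii: radial unboundedness of $V$ gives global boundedness of trajectories, and the structure of the closed loop identifies the largest invariant set in $\{\dot V=0\}$ as $\{0\}$. Pulling back via the diffeomorphism \eqref{X-S-1} yields global asymptotic stability of $(X^*,S^*)$ in $(0,+\infty)\times(0,S_{\text{in}})$. Finally, for the local exponential claim when $\alpha>0$, the fortification term is $C^1$ at $x_2=0$ with vanishing gradient, hence does not perturb the linearization at the origin; a direct computation shows that the closed-loop Jacobian is upper-triangular with both diagonal entries equal to $-D^*$, therefore Hurwitz, which gives local exponential stabilization.
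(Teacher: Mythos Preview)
Your overall scaffolding---a strict CLF built on $e^{x_1}-1-x_1$ plus an $x_2$-penalty, the Lipschitz check at $x_2=0$, and the linearization for local exponential stability---is correct and matches the paper. The gap is in the core absorption step for the cross term $b(e^{x_1}-1)(g(x_2)-1)$. On $\{x_2>0\}$ you apply Young's inequality and propose to absorb the residual $\tfrac{b^2}{2D^*g}(g-1)^2$ into $-\lambda D^*g\,e^{x_1}(1-e^{-x_2})^2$. But that absorbing term carries the factor $e^{x_1}$, which vanishes as $x_1\to-\infty$; no fixed choice of $\lambda$ closes the estimate, and the remaining term $-\tfrac12 D^*g(e^{x_1}-1)^2$ only provides the bound $-\tfrac12 D^*g$ in that limit, which in general does not dominate $\tfrac{b^2}{2D^*g}(g-1)^2$ uniformly over $x_2>0$. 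The paper confronts exactly this obstruction by a case split: for $x_1\geq -c$ (some suitable $c>0$) the Young route works since $e^{x_1}\geq e^{-c}$, whereas for $x_1<-c$ the paper \emph{avoids} Young and regroups the first two terms of $\dot V$ as $-g(x_2)(1-e^{x_1})\big[b\tfrac{g(x_2)-1}{g(x_2)}-D^*(e^{x_1}-1)\big]$; assumption \textbf{(A)} (in the form $b\tfrac{g-1}{g}\geq -rD^*$ for some $r\in[0,1)$ on $x_2\geq 0$) makes the bracket strictly positive once $e^{x_1}<1-r$, so $\dot V<0$ there without any absorption.

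A second issue arises on $\{x_2\leq 0\}$: your claim that the fortification ``dominates $(g-1)^2$'' fails near $x_2=0$, since the fortification term in $\dot V$ carries the prefactor $(1-e^{-x_2})\sim -|x_2|$ and hence is of order $|x_2|^{2+\alpha}$, while the Young residual is of order $|x_2|^{2}$. The paper's fix is structural: instead of your symmetric penalty $\lambda(e^{-x_2}-1+x_2)$ it uses a tailored piecewise $Q(x_2)$. For $x_2\leq 0$, $Q$ contains the extra integral $\tfrac{b}{2D^*\delta}\int_{x_2}^{0}\tfrac{|g(s)-1|^{1-\alpha}}{p(s)g(s)}\,ds$, whose derivative multiplied by $p(x_2)\,\delta b|g(x_2)-1|^{1+\alpha}$ \emph{exactly} cancels the Young residual $\tfrac{b^2}{2D^*g}(g-1)^2$; for $x_2>0$, $Q(x_2)=R\int_0^{x_2}\tfrac{e^s(e^s-1)}{g(s)^2}\,ds$ with $R$ large, which produces a term $-RD^*e^{x_1}(e^{x_2}-1)^2/g(x_2)$ suited to the case split above. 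In short, the ``careful tuning of $\lambda$'' you invoke is not enough: both a nontrivial piecewise construction of the $x_2$-component of $V$ and a case distinction in $x_1$ are needed.
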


The feedback law \eqref{Feedback1} in the original coordinates is given by the equation:
\begin{align}
    &D(X,S)=\frac{D^*\mu(S)X}{\mu(S^*)X^*}+\frac{\delta b}{(\mu(S^*))^{1+\alpha}}\begin{cases}
        |\mu(S)-\mu(S^*)|^{1+\alpha}, \quad \text{if}\quad S\leq S^*\\
        0, \quad \text{if} \quad S>S^*.
    \end{cases}\label{Feedback-origin}
\end{align}

Formula \eqref{Feedback-origin} shows that the feedback law is composed by two terms. The first term
\begin{align*}
    \frac{D^*\mu(S)X}{\mu(S^*)X^*}
\end{align*} coincides with the feedback proposed in \cite{Mailleret2001-A-simple,Karafyllis2008-Vector} when $b=0$ (notice that $\mu(S^*)=D^*$ when $b=0$), namely when the mortality rate is neglected.

The second term
\begin{equation}\label{eq-1}
    \frac{\delta b}{(\mu(S^*))^{1+\alpha}}\begin{cases}
        |\mu(S)-\mu(S^*)|^{1+\alpha}, \quad \text{if}\quad S\leq S^*\\
        0, \quad \text{if} \quad S>S^*
    \end{cases}
\end{equation}
     is a non-negative bounded term that vanishes when $b=0.$ Moreover, since $\delta>0$ is arbitrarily small, we can select it appropriately in order to make it as small as desired. 
     On the other hand, a higher value of $\delta$ can accelerate the convergence to the equilibrium point (see Section \ref{Sec-Simulation} below).
    
Theorem \ref{Th1} shows that the feedback law $D(x)=\frac{\mu(S)X}{X^*}$ proposed in \cite{Mailleret2001-A-simple,Karafyllis2008-Vector} for the case when $b=0$ can be complemented by an additive term in order to achieve global asymptotic stabilization, at least for the case when the mortality rate satisfies 
\begin{align*}
        0<b< p_0\mu(S),\quad \forall S\in [S^*,S_{\text{in}}].
    \end{align*}
This additional term \eqref{eq-1}
  is physically meaningful: it increases the value of the dilution rate so that the system is not left to spend much time in conditions where there is lack of sufficient nutrient (i.e, conditions where $S$ is significantly less than $S^*$).

    The following result, proved in Section \ref{Proof-Th2}, shows that assumption \textbf{(A)} is close to being necessary for the global stabilization of system \eqref{systeme-simple2}. Indeed, it shows a case where assumption \textbf{(A)} is violated and global asymptotic stabilization is not possible by means of a non-negative, locally Lipschitz feedback law.
    
    \begin{theorem}\label{Th2}
        Suppose that there exists a constant $\overline{S}\in (S^*,S_{\text{in}})$ such that
        \begin{align}
           p_0\mu(S_{\text{in}})&<p_0\mu(\overline{S})<b \label{AssumpTh2}\\
           \mu'(S)&\leq 0, \quad \forall S\in [\overline{S}, S_{\text{in}}].\label{AssumpTh2*}
        \end{align} 
        Then, there is no locally Lipschitz feedback law $D(x)\geq 0$  that achieves global stabilization of the origin of \eqref{systeme-simple2}. More precisely, for any locally Lipschitz feedback $D(x)\geq 0$, the closed-loop system admits unbounded solutions. 
    \end{theorem}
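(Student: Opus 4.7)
The plan is to exhibit, for any admissible feedback $D(\cdot)\geq 0$, an explicit one-parameter family of initial conditions from which the closed-loop trajectory is unbounded in $\mathbb{R}^2$. The guiding intuition is that whenever $S$ lies in the interval $[\overline{S},S_{\text{in}}]$, the hypothesis $p_0\mu(S)\leq p_0\mu(\overline{S})<b$ combined with $D\geq 0$ forces
\[
\dot{X}=(p_0\mu(S)-b-D)X\leq -cX, \qquad c:=b-p_0\mu(\overline{S})>0,
\]
independently of the feedback. Thus, if we can ensure that $S$ never drops below $\overline{S}$, the biomass must decay exponentially; this decay, together with the geometry of the diffeomorphism \eqref{x_1-x_2}, will then yield unboundedness in the $x$-variables.

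First I would fix any $S_0\in(\overline{S},S_{\text{in}})$ and pick $X_0>0$ small enough that $MX_0/c<(S_0-\overline{S})/2$, where $M:=\sup_{S\in[0,S_{\text{in}}]}\mu(S)$. Let $[0,T)$ denote the maximal interval of existence of the closed-loop solution starting from $(X_0,S_0)$, and let $T^*\in(0,T]$ be the first time at which $S$ equals $\overline{S}$ (set $T^*:=T$ if no such time exists). On $[0,T^*)$ the monotonicity assumption \eqref{AssumpTh2*} gives $\mu(S(t))\leq\mu(\overline{S})$, so the above estimate yields $X(t)\leq X_0 e^{-ct}$. Inserting this into $\dot{S}=D(S_{\text{in}}-S)-\mu(S)X\geq -MX_0 e^{-ct}$ and integrating produces $S(t)\geq S_0-MX_0/c>(S_0+\overline{S})/2>\overline{S}$ throughout $[0,T^*)$. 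This strict lower bound precludes $S(t)$ from ever touching $\overline{S}$, forcing $T^*=T$.

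It remains to deduce unboundedness of $x(t)=(x_1(t),x_2(t))$ in the two remaining scenarios. If $T<+\infty$, the standard theory of maximal intervals implies that the $(X,S)$-trajectory must leave every compact subset of $(0,+\infty)\times(0,S_{\text{in}})$ as $t\to T^-$; since \eqref{x_1-x_2} is a diffeomorphism onto $\mathbb{R}^2$, this is equivalent to $\|x(t)\|\to+\infty$. If $T=+\infty$, the bound $X(t)\leq X_0 e^{-ct}$ yields $X(t)\to 0$ while $S(t)\in[(S_0+\overline{S})/2,S_{\text{in}})$. In the case $\limsup_{t\to\infty}S(t)<S_{\text{in}}$, the denominator $D^*(S_{\text{in}}-S(t))$ in \eqref{x_1-x_2} is bounded below by a positive constant, so $x_1(t)\to-\infty$; otherwise some subsequence satisfies $S(t_n)\to S_{\text{in}}$, giving $x_2(t_n)\to+\infty$. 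In every case $\|x(t)\|$ is unbounded.

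The main obstacle I anticipate is organizing the bootstrap so that the mere sign constraint $D\geq 0$, which allows $D(t)$ to be arbitrarily large, truly suffices. The resolution is that both occurrences of $D$ in the dynamics act in our favor: the term $-DX$ in the $X$-equation only accelerates extinction, while $+D(S_{\text{in}}-S)$ in the $S$-equation only pushes $S$ further toward $S_{\text{in}}$, i.e., deeper into the mortality-dominated region $[\overline{S},S_{\text{in}}]$. This favorable sign structure, combined with the smallness of $X_0$, closes the continuity argument without requiring any upper bound on $D$.
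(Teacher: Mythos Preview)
Your argument is correct. It takes a route that is genuinely different in execution, though the underlying mechanism is the same as in the paper.

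The paper carries out the entire proof in the transformed coordinates $(x_1,x_2)$: it fixes $\beta>0$ with $(b+D^*)g(\beta)=b-\theta$, shows $g$ is nonincreasing for $x_2\geq\beta$, and then proves by contradiction (via a comparison argument on $\frac{d}{dt}g(x_2(t))$ involving an explicit bound on $|g'|$) that the set $\{x_2\geq\beta\}$ is forward invariant for a suitably chosen initial condition. From $\dot x_1\leq-\theta$ on that set it obtains $x_1(t)\to-\infty$ directly, without needing to distinguish whether $S$ approaches $S_{\text{in}}$.

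You instead revert to the physical coordinates $(X,S)$ and run a cleaner bootstrap: exponential decay of $X$ on $\{S\geq\overline S\}$ feeds into a one-line lower bound for $S$, and smallness of $X_0$ closes the invariance argument immediately. The trade-off is that you must split the final conclusion into two subcases ($\limsup S<S_{\text{in}}$ versus $S(t_n)\to S_{\text{in}}$), whereas the paper's transformed-coordinate estimate $\dot x_1\leq-\theta$ handles both at once. On balance your proof is more elementary: it avoids the paper's computation of bounds on $g'$ and its somewhat heavy Claim~\ref{claim-x-2}, at the cost of a short case distinction at the end. Your observation that both occurrences of $D$ act favorably (accelerating extinction in $X$, pushing $S$ toward $S_{\text{in}}$) is exactly the structural reason neither proof needs an upper bound on the feedback.
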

    
    Consider the chemostat model \eqref{system-simple1} with \eqref{eqpt-mu1} in the particular case when the specific growth rate $\mu$ is given by the following Haldane kinetic equation:
    \begin{align}
        \mu(S):=\frac{M S}{K+S+aS^2}, \quad \forall S \geq 0
    \end{align} where $M, K,a>0$ are constants. This case is important because it is studied extensively in the literature (see for e.g \cite{Butler1985-Mathematical}, \cite{Mazenc2010-Stabilization}, \cite{Mazenc2010-Stabilization-2-species} and the references therein). If $S_{\text{in}}$ is such that $\mu(S_{\text{in}})>b$, i.e., if the growth/birth kinetics dominate the rate of death, then for $D^*, X^*, S^*>0$ such that \eqref{eqpt-mu1} holds, we can apply Theorem \ref{Th1} and conclude that the feedback law \eqref{Feedback1} achieves global stabilization of $(X^*,S^*)$ (see Example \ref{Ex1} in Section \ref{Sec-Simulation}). 
    
Additionally, if $\mu(S_{\text{in}})<b$, meaning that the rate of death outpaces the rate of reproduction, Theorem \ref{Th2} indicates that, under the Haldane birth/growth kinetics, there is no positive-valued feedback law that globally stabilizes the model. 
    The only case that is not covered by assumption \textbf{(A)} is the measure-zero case $\mu(S_{\text{in}})=b$. The same conclusions hold for any other case where the kinetic equation for the specific growth rate is given by a $C^1$ function $\mu$ for which there exists  $\overline{S} \in (0, +\infty] $ with $\mu'(S)\geq 0$ for $S \in [0, \overline{S})$ and $\mu'(S)\leq 0$ for $S > \overline{S}$ and $\overline{S} < +\infty $ (a class of functions that contains all concave functions).  
    
    \section{An age-structured Chemostat Model}\label{Sec-Age}

    Consider the age-structured chemostat model: For $t\geq 0,$ $a>0$
    \begin{align}
        &\frac{\partial f}{\partial t}(t,a)+\frac{\partial f}{\partial a}(t,a)=-(\beta(a)+D(t))f(t,a),\label{system-age1}\\
        &f(t,0)=\mu(S(t))\int_0^{+\infty}k(a)f(t,a)da,\label{system-agei1}\\
        &\dot{S}(t)=D(t)(S_{\text{in}}-S(t))-\mu(S(t))\int_0^{+\infty}q(a)f(t,a)da,\label{system-ageS1}
    \end{align} where $f(t,a)>0$ is the distribution function of the microbial population in the chemostat at time $t\geq 0$ and age $a\geq 0,$ $S(t)>0$ is the limiting substrate concentration, $S_{\text{in}}>0$ is the inlet concentration of the substrate, $D(t)>0$ is the dilution rate, $\mu$ is the specific growth rate function, $\beta(a)$ is the mortality rate and $k, q$ are functions that determine the birth of new cells and the substrate consumption of the microbial population, respectively. All functions $\mu, \beta, k, q:\mathbb{R}_+\rightarrow\mathbb{R}_+$ are assumed to be bounded, $C^1(\mathbb{R}_+)$ functions with $\mu(0)=0,$ $\mu(S)>0$ for all $S>0$ and 
    \begin{align*}
        \int_0^{+\infty}k(a)da>0, \quad \int_0^{+\infty}q(a)da>0.
    \end{align*}
    Clearly, system \eqref{system-age1}-\eqref{system-agei1}-\eqref{system-ageS1} is a complicated nonlinear model that consists of a hyperbolic PDE with a non-local boundary condition and an ODE. Similar age-structured chemostat models have been studied in \cite{Toth2006-Limit}.

    In order to study model  \eqref{system-age1}-\eqref{system-agei1}-\eqref{system-ageS1}, we define for $t\geq 0:$
    \begin{align}
        X(t)=\int_0^{+\infty}q(a)f(t,a)da,\quad Y(t)=\int_0^{+\infty}k(a)f(t,a)da. \label{X(t)-Y(t)}
    \end{align}
    These variables $X$ and $Y$, respectively, refer to the {\em feed activity} (nutrient consumption activity) and {\em birth activity}. Assuming that the solution of \eqref{system-age1}-\eqref{system-agei1}-\eqref{system-ageS1} satisfies $\underset{a\rightarrow+\infty}{\lim} f(t,a)=0$ for all $t\geq 0$, meaning that the portion of population with infinite age is zero, we get using \eqref{system-age1}-\eqref{system-agei1}-\eqref{system-ageS1} and, \eqref{X(t)-Y(t)}:
    \begin{align}
        \dot{X}(t)=&q(0)\mu(S(t))Y(t)+\int_0^{+\infty}(q'(a)-\beta(a)q(a))f(t,a)da-D(t)X(t)\label{Xp}\\
        \dot{Y}(t)=&k(0)\mu(S(t))Y(t)+\int_0^{+\infty}(k'(a)-\beta(a)k(a))f(t,a)da-D(t)Y(t).\label{Yp}
    \end{align}
    We use the following assumption.\\ \\
    \textbf{(B)} There exist constants $b, \gamma, M\geq 0,$ $p_0,q_0,\overline{a}>0$ with $\beta(a)\leq b$ and $\gamma a\leq M \exp\left(ba-\int_0^{a}\beta(s)ds\right)$ for all $a\geq \overline{a}$ such that the following equations hold for $a\geq 0:$
    \begin{subequations}
    \begin{align}
        q(a)&=\exp\left(\int_0^{a}\beta(s)ds-ba\right)q_0,\\
        k(a)&=\exp\left(\int_0^a\beta(s)ds-ba\right)(p_0+\gamma q_0 a).
    \end{align}\label{q-k}
    \end{subequations}
    Assumptions similar to \textbf{(B)} have been used extensively in the literature (see \cite{Toth2006-Limit}). Notice that, since 
    \begin{align*}
        \beta(a)\leq b, \quad  \gamma a\leq M \exp\left(ba-\int_0^{a}\beta(s)ds\right), \quad \forall a\geq \overline{a},
    \end{align*} we deduce from \eqref{q-k} that $k,q$ are globally Lipschitz and bounded functions.

    Exploiting \eqref{Xp},\eqref{Yp}, \eqref{q-k} and \eqref{system-ageS1} we get the following ODE model:
    \begin{subequations}
        \begin{align}
        &\dot{S}=D(S_{\text{in}}-S)-\mu(S)X\\
        &\dot{X}=q_0\mu(S)Y-(b+D)X\\
        &\dot{Y}=p_0\mu(S)Y+\gamma X-(b+D)Y.
    \end{align}\label{systeme-ageODE}
    \end{subequations}
    We study next the ODE model \eqref{systeme-ageODE} with the physically meaningful state space
    \begin{align}
        (X,Y,S)\in (0,+\infty)^2\times (0, S_{\text{in}}).\label{space-age}
    \end{align}
    If the following holds
    \begin{align*}
        \frac{(b+D^*)^2}{p_0(b+D^*)+\gamma q_0}\in \mu((0,S_{\text{in}})),
    \end{align*} then the open-loop system \eqref{systeme-ageODE} with $D(t)\equiv D^*>0$ has equilibrium points. The open-loop equilibrium points for \eqref{systeme-ageODE} with $D(t)\equiv D^*>0$ can be found from the equations
    \begin{align}
\begin{array}{rcl}
\mu(S^*)&=&\displaystyle\frac{(b+D^*)^2}{p_0(b+D^*)+\gamma q_0},\quad
        \\
        X^*&=&\displaystyle\frac{D^*(S_{\text{in}}-S^*)}{(b+D^*)^2}(p_0(b+D^*)+\gamma q_0),\quad
        \\
        Y^*&=&\displaystyle\frac{D^*(S_{\text{in}}-S^*)}{q_0(b+D^*)^3}(p_0(b+D^*)+\gamma q_0)^2.
\end{array}
\label{eqpt2}
    \end{align}
    Just like the simple chemostat models, the above equations can give multiple equilibrium points, excluding the possibility of global asymptotic stability for any of them.

    The stability properties of equilibrium points can be found by checking the Jacobian matrix
    \begin{align*}
        \begin{pmatrix}
            &-D^*-\mu'(S^*)X^* \ &-\mu(S^*) \ &0\\
            &q_0\mu'(S^*)Y^* \ &-(b+D^*)\ &q_0\mu(S^*)\\
            &p_0\mu'(S^*)Y^* \ &\gamma \ &-b-D^*+p_0\mu(S^*)
        \end{pmatrix}.
    \end{align*}
    The characteristic polynomial of the Jacobian matrix is the cubic polynomial
    \begin{eqnarray}
        h(s)&=&s^3+\left(b+\frac{\gamma q_0(b+D^*)}{p_0(b+D^*)+\gamma q_0}+2D^*+\zeta\right)s^2+(b+D^*)\left(D^*+\frac{(D^*+\zeta)\gamma q_0}{p_0(b+D^*)+\gamma q_0}+2\zeta\right)s\nonumber\\
        &&+(b+D^*)^2\zeta,\label{h}
    \end{eqnarray} 
     where $\zeta=\mu'(S^*)X^*.$ It is clear that an equilibrium point of the open-loop system \eqref{systeme-ageODE} with $D(t)\equiv D^*>0$ is unstable when $\mu'(S^*)<0$.

    The problem of the global stabilization by means of a positive-valued feedback law (since $D(t)>0$) of an arbitrary equilibrium point of system \eqref{systeme-ageODE} with state space given by \eqref{space-age} is clearly non-trivial.

    Consider the transformation
    \begin{subequations}
        \begin{align}
        x_1=\ln\left(\frac{(S_{\text{in}}-S^*)X}{X^*(S_{\text{in}}-S)}\right), \quad x_2=\ln\left(\frac{X^*Y}{Y^*X}\right), x_3=\ln\left(\frac{S(S_{\text{in}}-S^*)}{S^*(S_{\text{in}}-S)}\right),
    \end{align}\label{x_1-x_2-x_3}
    \end{subequations}
    which maps $(0,+\infty)^2\times (0,S_{\text{in}})$ onto $\mathbb{R}^3$ and the point $(X^*,Y^*,S^*)\in (0,+\infty)^2\times (0,S_{\text{in}})$ to $0\in \mathbb{R}^3.$ Its inverse reads
    \begin{subequations}
        \begin{align}
        X=\frac{X^*(\kappa+1)}{p(x_3)}e^{x_1-x_3}, Y=\frac{Y^*(\kappa+1)}{p(x_3)}e^{x_1+x_2-x_3}, \quad S=\frac{S_{\text{in}}}{p(x_3)},
    \end{align}\label{X-Y-S}
    \end{subequations}
     where the constant $\kappa>0$ and the function $p$ are defined by $\eqref{g-p-kappa}$. Therefore, we rewrite the chemostat model \eqref{systeme-ageODE} as the following control system:
    \begin{subequations}
        \begin{align}
        &\dot{x}_1=(b+D^*)g(x_3)(e^{x_2}-1)+b(g(x_3)-1)+D^*g(x_3)(1-e^{x_1})\label{syst-age-x1}\\
        &\dot{x}_2=(b+D^*)(\lambda(1-g(x_3))+\lambda((e^{-x_2}-1)+g(x_3)(1-e^{x_2}))\label{syst-age-x2}\\
        &\dot{x}_3=p(x_3)(D-D^*g(x_3)e^{x_1})\\
        &x=(x_1,x_2,x_3)\in \mathbb{R}^3, D>0
    \end{align}\label{systeme-age-stud}
    \end{subequations}
     where the function $g$ is defined by \eqref{g-p-kappa} and the constant $\lambda\in [0,1)$ is given by
    \begin{align}
        \lambda:=\frac{\gamma X^*}{Y^*(b+D^*)}=1-\frac{p_0\mu(S^*)}{b+D^*}.\label{lambda}
    \end{align}
    As in the previous (lumped) case, in order to construct a locally Lipschitz, positive-valued stabilizing feedback for system \eqref{systeme-age-stud}, we need an assumption about the specific growth rate $\mu(S).$\\ \\
    \textbf{(C)} There exists a constant $\varphi>1$ such that the following inequality holds for $S\in [S^*, S_{\text{in}}]:$
    \begin{align}
        \left(b-\frac{\lambda(b+D^*)}{2}\right)\left(\frac{\mu(S^*)}{\mu(S)}-1\right)+(1+\lambda)\frac{\lambda^2\varphi(b+D^*)^2}{4D^*}\left(\frac{\mu(S^*)}{\mu(S)}-1\right)^2<D^*\left(1-\frac{1}{4(1+\lambda)\varphi}\right).\label{Assump-C}
    \end{align}
    Assumption \textbf{(C)} is a combined restriction on the lower bound of specific growth rate $\mu(S)$ for $S\in [S^*, S_{\text{in}}]$ and on the magnitude of the mortality rate $b$. When $\lambda=0$ (i.e when $\gamma=0;$ recall \eqref{lambda}), by using \eqref{eqpt2} we conclude that assumption \textbf{(C)} is equivalent to the requirement that the inequality $p_0\mu(S)>b$ holds for all $S\in [S^*, S_{\text{in}}].$ Therefore, in this case assumption \textbf{(C)} is identical to assumption \textbf{(A)}.

    We obtain the following result whose proof is given in Section \ref{Proof-Th3}.
    
    \begin{theorem}\label{Th3}
        Consider system \eqref{systeme-age-stud} under assumption \textbf{(C)}. Then for every $\delta>0$ and $\alpha\in [0,1),$ the feedback law
        \begin{align}
            D=(\kappa+1)D^*g(x_3)\frac{e^{x_1-x_3}}{p(x_3)}+\delta \begin{cases}
                |g(x_3)-1|^{1+\alpha},\quad  x_3\leq 0\\
                0\quad \text{if}\quad x_3>0
            \end{cases}\label{feedback2}
        \end{align} achieves global asymptotic stabilization of $0\in \mathbb{R}^3.$ Moreover, if $\alpha >0$ and $\lambda>0$ then the feedback law \eqref{feedback2} achieves in addition local exponential stabilization of $0\in \mathbb{R}^3$.
    \end{theorem}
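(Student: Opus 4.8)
The plan is to mirror the Lyapunov-based strategy that works for the lumped system \eqref{systeme-simple2} (Theorem~\ref{Th1}), but now in $\mathbb{R}^3$ and with the extra coupling term proportional to $\lambda$. First I would substitute the feedback \eqref{feedback2} into \eqref{systeme-age-stud}. The first summand of $D$ is chosen precisely so that $D - D^*g(x_3)e^{x_1}$ reduces to a clean expression in $x_1,x_3$ (using $\kappa+1 = p(x_3) + \kappa(1 - e^{-x_3})\cdot(\cdot)$, i.e. the algebraic identity relating $p$, $\kappa$ and $e^{x_3}$); concretely the closed-loop $\dot x_3$ becomes $D^*g(x_3)e^{x_1}\bigl(\text{something vanishing at }0\bigr)$ plus the nonnegative fortification term $\delta\,\mathbf{1}_{\{x_3\le 0\}}|g(x_3)-1|^{1+\alpha}\,p(x_3)$. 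The point of the fortification, exactly as in the discussion after Theorem~\ref{Th1}, is that on the bad region $x_3 \le 0$ (equivalently $S \le S^*$, nutrient depletion) it pushes $x_3$ upward, since $p(x_3)>0$.

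Next I would build a strict CLF. I expect it to be a sum $V(x) = V_1(x_1) + c\,V_2(x_2) + V_3(x_1,x_2,x_3)$ or, more likely given the quadratic-in-$(\mu(S^*)/\mu(S)-1)$ form of assumption \textbf{(C)}, a weighted quadratic-type form in the ``physical'' deviations combined with the logarithmic (entropy-like) terms $e^{x_i}-1-x_i$ that are natural for these diffeomorphisms. The constant $\varphi>1$ in \eqref{Assump-C} and the factors $\tfrac14$, $(1+\lambda)$ strongly suggest that along the way one completes a square: a cross term of the form $(b+D^*)g(x_3)(e^{x_2}-1)$ appearing in $\dot x_1$ gets dominated by a $D^*(1-e^{x_1})^2$-type term from the $x_1$ dynamics and an $x_2$-dissipation term from \eqref{syst-age-x2}, with the leftover quantified by exactly the right-hand side of \textbf{(C)}. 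I would first verify that the $x_2$-subsystem, viewed with $x_3$ frozen, is dissipative — from \eqref{syst-age-x2}, $\dot x_2$ contains $-(b+D^*)\lambda(e^{-x_2}-1)$-type restoring terms plus a $g(x_3)$-dependent forcing — and then absorb the $g(x_3)-1$ forcing using assumption \textbf{(C)} and, on $x_3\le 0$, the fortification term in $\dot x_3$.

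The main obstacle, as in the lumped case, is handling the region $x_3 \le 0$: there $g(x_3)-1$ need not have a favorable sign relative to the other terms, and $\dot V$ is not obviously negative from the ``nominal'' terms alone. The resolution will be to show that the fortification term in $\dot x_3$ contributes a negative $-\delta\,|g(x_3)-1|^{1+\alpha}\cdot(\text{positive})$ to $\dot V$ through the $\partial V/\partial x_3$ component, and that this, together with assumption \textbf{(C)} bounding the destabilizing contributions uniformly in $S\in[S^*,S_{\text{in}}]$ (hence in $x_3\le 0$, since $x_3\le 0 \iff S\le S^*$), makes $\dot V$ negative definite. After establishing $\dot V < 0$ on $\mathbb{R}^3\setminus\{0\}$ with $V$ radially unbounded (which follows because each diffeomorphism component blows up at the boundary of the physical state space), global asymptotic stability follows by the standard Lyapunov theorem; local Lipschitzness of \eqref{feedback2} is immediate except at $x_3=0$, where the exponent $1+\alpha>1$ guarantees $C^1$-matching of the two branches when $\alpha>0$ and at least Lipschitz continuity when $\alpha=0$. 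Finally, for the local exponential claim when $\alpha>0$ and $\lambda>0$, I would linearize the closed loop at $0$: the fortification term is $o(|x|)$ so it does not affect the linearization, and one checks the Jacobian is Hurwitz — here $\lambda>0$ is what makes the relevant $x_2$-eigenvalue strictly negative rather than merely marginal, so a Lyapunov-function argument (or Hartman–Grobman together with the global result) gives local exponential stability.
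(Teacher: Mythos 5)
Your overall plan is the right one and matches the paper's strategy: a Lyapunov function built from the entropy-like terms $e^{x_1}-x_1-1$ and $e^{x_2}-x_2-1$ plus a piecewise-defined $Q(x_3)$, Young-inequality completion of squares to absorb the cross term $(b+D^*)g(x_3)(e^{x_1}-1)(e^{x_2}-1)$, dissipation from the $x_2$-dynamics tuned by a constant $B$, and a case analysis in which the fortification term in $\dot x_3$ neutralizes the troublesome $(g(x_3)-1)^2$ contribution; the linearization argument for local exponential stability with the $-2\lambda$ eigenvalue is exactly what the paper does.

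There is, however, one concrete error in how you partition the roles of the two mechanisms. You write that assumption \textbf{(C)} ``bounds the destabilizing contributions uniformly in $S\in[S^*,S_{\text{in}}]$ (hence in $x_3\le 0$, since $x_3\le 0 \iff S\le S^*$),'' and you invoke both \textbf{(C)} and the fortification term on the region $x_3\le 0$. But $S\in[S^*,S_{\text{in}}]$ is $x_3\ge 0$, not $x_3\le 0$ — your own equivalence gives that. So the two mechanisms live on \emph{disjoint} regions, and the paper's case split is driven by exactly this separation: on $x_3\le 0$ (Case 1) the fortification $u>0$ is present and, because $Q'(x_3)$ for $x_3\le 0$ is built with a $1/\delta$ factor that cancels the $\delta$ in $u$, the product $Q'(x_3)p(x_3)u$ exactly cancels $\Omega\frac{(g(x_3)-1)^2}{2D^*g(x_3)}$ — no appeal to \textbf{(C)} is needed there; on $x_3>0$ the fortification vanishes identically, and the damping term $-\frac{RD^*}{g(x_3)}e^{x_1}(e^{x_3}-1)^2$ becomes useless as $x_1\to-\infty$, which is precisely where assumption \textbf{(C)} (through the constant $r>0$ of Claim~\ref{Claim-Th3}) must take over. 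That is why the paper further splits $x_3>0$ into $x_1\ge -c$ (use the $e^{-c}$ lower bound on $e^{x_1}$ and the choice of $R$) and $x_1<-c$ (use Claim~\ref{Claim-Th3} and the choice of $c$ in \eqref{c*}). Without recognizing this, your sketch gives no mechanism to bound $\dot V$ on the set $\{x_3>0,\ x_1\ll 0\}$, which in the original variables is the physically relevant failure mode — ample substrate but severely depleted biomass — and is the reason \textbf{(A)}/\textbf{(C)} are stated over $[S^*,S_{\text{in}}]$ rather than $[0,S^*]$.
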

    
    It should be noted that the feedback law \eqref{feedback2} coincides with the feedback law \eqref{Feedback1} with the controller gain $\delta b$ that is used in \eqref{Feedback1} replaced by $\delta.$ The feedback law \eqref{feedback2} in the original coordinates is given by equation \eqref{Feedback-origin} with the controller gain $\delta b$ that is used in \eqref{Feedback-origin} replaced by $\delta.$ The feedback law \eqref{feedback2} is completely independent of the state $Y.$

\section{Illustrative examples}\label{Sec-Simulation}

    In this section we present examples that illustrate the stabilization results of the previous sections.
    \begin{example}\label{Ex1}\rm
        We consider the chemostat model \eqref{system-simple1} with
        \begin{align*}
            \mu(S)=\frac{aS}{1+S+S^2}, \quad D^*=\frac{9}{10},\quad  b=\frac{1}{10}, \quad p_0=1, \quad  a=\frac{7}{2}, \quad S_{\text{in}}=2+\frac{10}{3}\approx 5.333.
        \end{align*}
        The system has two equilibrium points, namely the points
        \begin{align*}
            (X^*,S^*)=(3,2) \quad \text{and}\quad (X^{**},S^{**})=\left(\frac{87}{20},\frac{1}{2}\right)
        \end{align*}
        Since $\mu'(S^*)<0$ and $\mu'(S^{**})>0,$ it holds from Fact \ref{fact1} that the point $(X^*,S^*)$ is unstable while the point $(X^{**},S^{**})$ is locally asymptotically stable. The phase diagram of the open-loop system is shown in Figure \ref{fig-open-loop}. It becomes clear from Figure \ref{fig-open-loop} that the stable manifold of the point $(X^*,S^*)$ divides the state space into two regions: one region (left of the stable manifolds) where all solutions are attracted by the washout equilibrium point $(X,S)=(0,S_{\text{in}})$ and one region (right of the stable manifold) where all solutions are attracted by the stable equilibrium point $(X^{**},S^{**}).$
        
        \begin{figure}[t]
            \centering
            \includegraphics[width=0.5\linewidth]{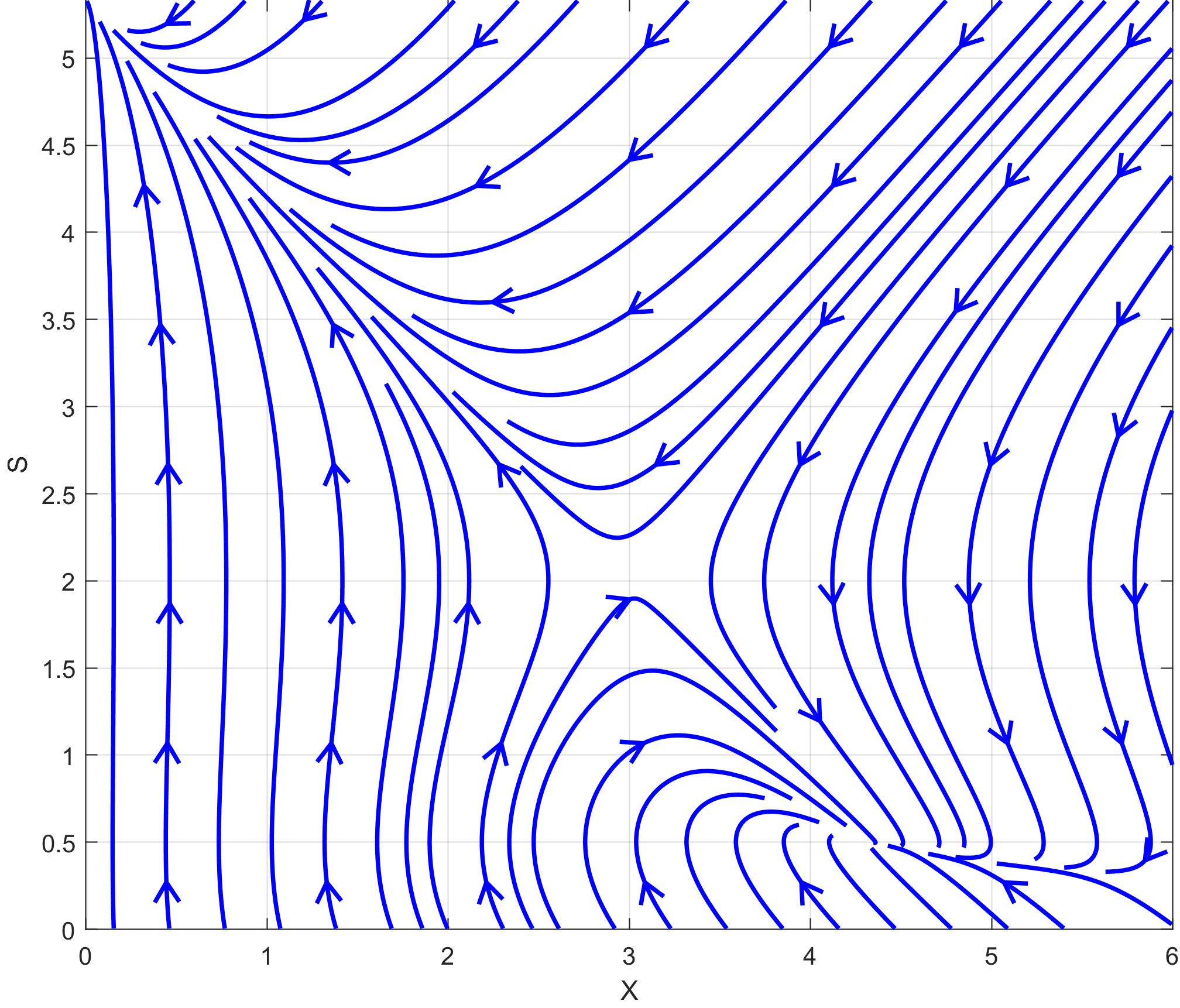}
            \caption{The phase diagram for the open-loop chemostat model \eqref{system-simple1} with $D=D^*=\frac{9}{10}$.}
            \label{fig-open-loop}
        \end{figure}
        
Noticing that
    \begin{align*}
        p_0\mu(S_{\text{in}})=\frac{42}{79}>b,
    \end{align*}
    the system \eqref{system-simple1} is globally stabilizable by the feedback law \eqref{Feedback-origin}. Figure \ref{fig-closed-loop} shows the phase diagram of the chemostat model \eqref{system-simple1} in closed loop with \eqref{Feedback-origin} for $\delta=10$ and  $\alpha=0.5$: as anticipated, all solutions are attracted by the point $(X^*,S^*).$

    \begin{figure}[t]
        \centering
        \includegraphics[width=0.5\linewidth]{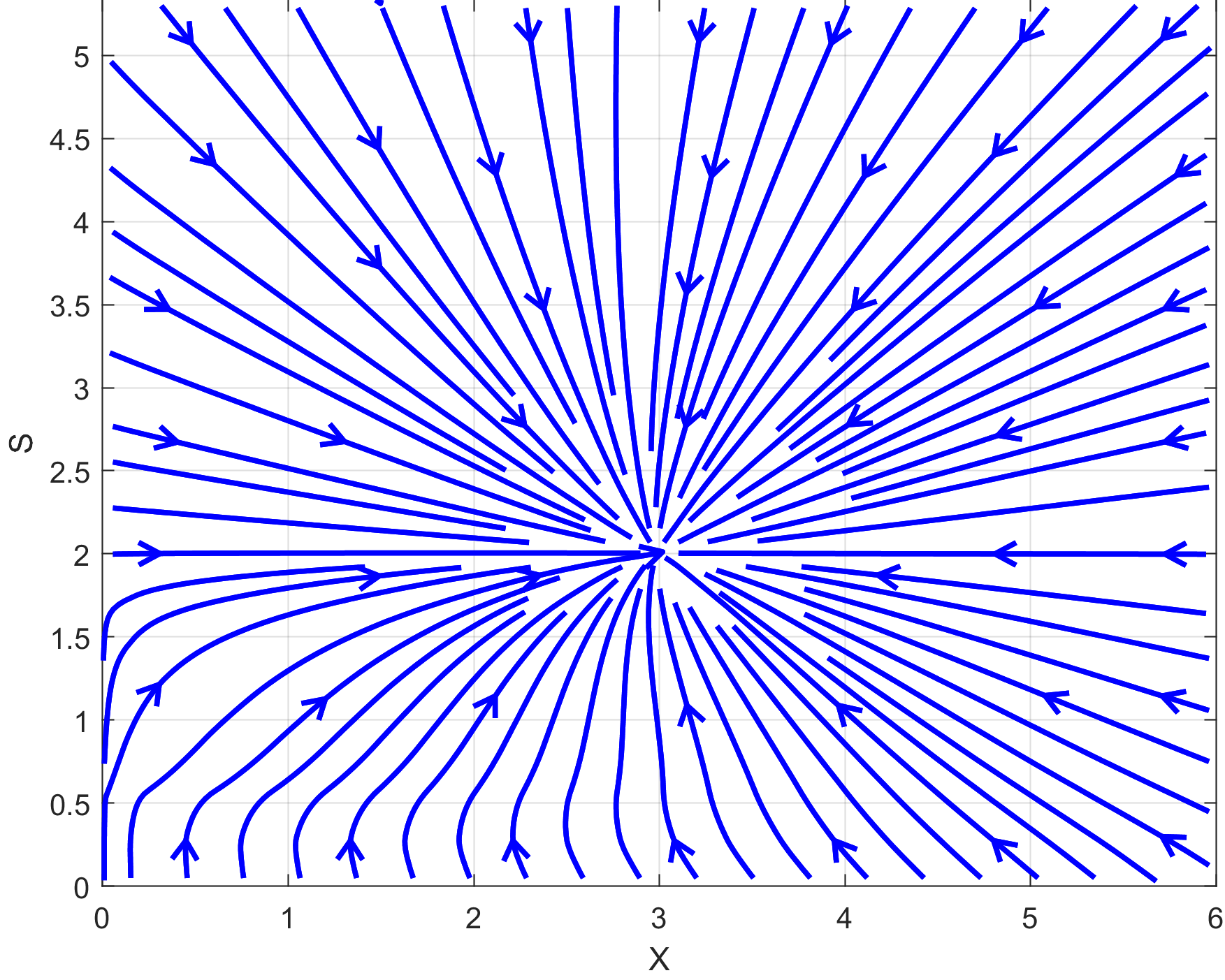}
        \caption{The phase diagram for the closed-loop chemostat model \eqref{system-simple1} with \eqref{Feedback-origin}.}
        \label{fig-closed-loop}
    \end{figure}

        \begin{figure}[b]
        \centering
        \includegraphics[width=0.5\linewidth]{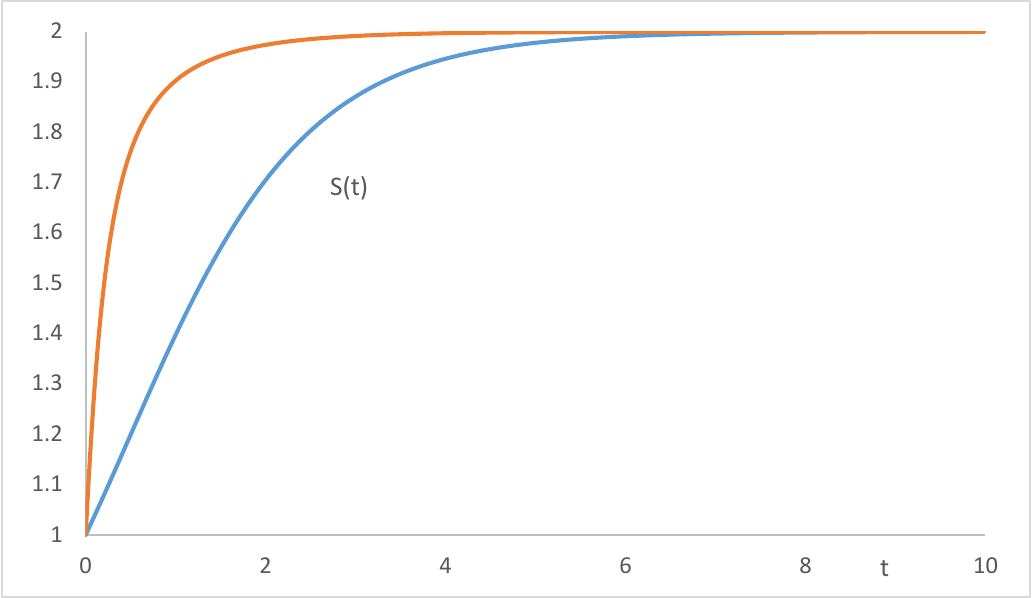}
        \caption{The component $S(t)$ of the solution of the closed-loop system \eqref{system-simple1} with \eqref{Feedback-origin}, $\alpha=0.5$ and initial condition $(X(0),S(0))=(1,1).$ Red line: $\delta=100.$ Blue line: $\delta=1.$}
        \label{fig-S(t)}
    \end{figure}
    
    Equations \eqref{system-simple1}, \eqref{eqpt-mu1} and \eqref{Feedback-origin} show that the following ODE is valid for the closed-loop chemostat model \eqref{system-simple1} with  \eqref{Feedback-origin}:
    \begin{align}
        \dot{S}=\mu(S)X\frac{S^*-S}{S_{\text{in}}-S^*}+\frac{\delta b(S_{\text{in}}-S)}{(\mu(S^*))^{1+\alpha}}\begin{cases}
            |\mu(S)-\mu(S^*)|^{1+\alpha}, \quad \text{if}\quad S\leq S^*\\
            0, \quad \text{if}\quad S>S^*.
        \end{cases}\label{Sdot}
    \end{align}
    Differential equation \eqref{Sdot} shows that the regions
    \begin{align*}
        \{(X,S)\in \mathbb{R}^2:\ S^*<S<S_{\text{in}}, \quad  X>0\},\quad\{(X,S^*)\in \mathbb{R}^2: \ X>0\}, \quad \text{and}\\ 
        \{(X,S)\in \mathbb{R}^2:\ 0<S<S^*, \ X>0\}
    \end{align*} are positively invariant regions for the closed-loop chemostat model \eqref{system-simple1} with \eqref{Feedback-origin}. Therefore, the controller gain $\delta>0$ has no effect on the solutions of the closed-loop system \eqref{system-simple1} with \eqref{Feedback-origin} when $S\in [S^*,S_{\text{in}}).$ On the other hand, higher values of the controller gain $\delta>0$ accelerate the convergence of the closed-loop chemostat model \eqref{system-simple1} with \eqref{Feedback-origin} to the equilibrium point $(X^*,S^*)$ when $S(0)<S^*$. This is shown in Figure \ref{fig-S(t)}.
\end{example}
\begin{example}\label{Ex2}\rm
    We consider the chemostat model \eqref{systeme-ageODE} with 
    \begin{align*}
        \mu(S)=\frac{aS}{1+S+S^2}, \quad D^*=\frac{9}{10}, \quad b=\frac{1}{10}, \quad p_0=0.8,
        q_0=1, \quad \gamma=0.2,\quad   a=\frac{7}{2}, \\ S_{\text{in}}=2+\frac{10}{3}\approx 5.333.
    \end{align*}
    The system has two equilibrium points, namely the points
    \begin{align*}
        (X^*,Y^*,S^*)=(3,3,2), \quad  (X^{**},Y^{**},S^{**})=\left(\frac{87}{20},\frac{87}{20}, \frac{1}{2}\right).
    \end{align*}
    Using \eqref{h}, we can determine the characteristic polynomials of the Jacobian matrices evaluated at the equilibrium points:

    Characteristic polynomial at $(X^*,Y^*,S^*):$ $h(s)=s^3-2.4s^2-8.82s-4.5$

    Characteristic polynomial at $(X^{**},Y^{**},S^{**}):$ $h(s)=s^3+6.6s^2+10.98s+4.5$.

    Therefore, the equilibrium point $(X^*,Y^*,S^*)$ is unstable with one positive eigenvalue (and two negative eigenvalues) while the equilibrium point $(X^{**},Y^{**},S^{**})$ is locally asymptotically stable with three real negative eigenvalues. Unless a solution starts on the stable manifold of $(X^*,Y^*,S^*),$ all other solutions converge either to $(X^{**},Y^{**},S^{**})$ or to the washout equilibrium $(X,Y,S)=(0,0,S_{\text{in}}).$ This is depicted in Figure \ref{fig-age-open}.
    
    \begin{figure}[t]
        \centering
        \includegraphics[width=0.5\linewidth]{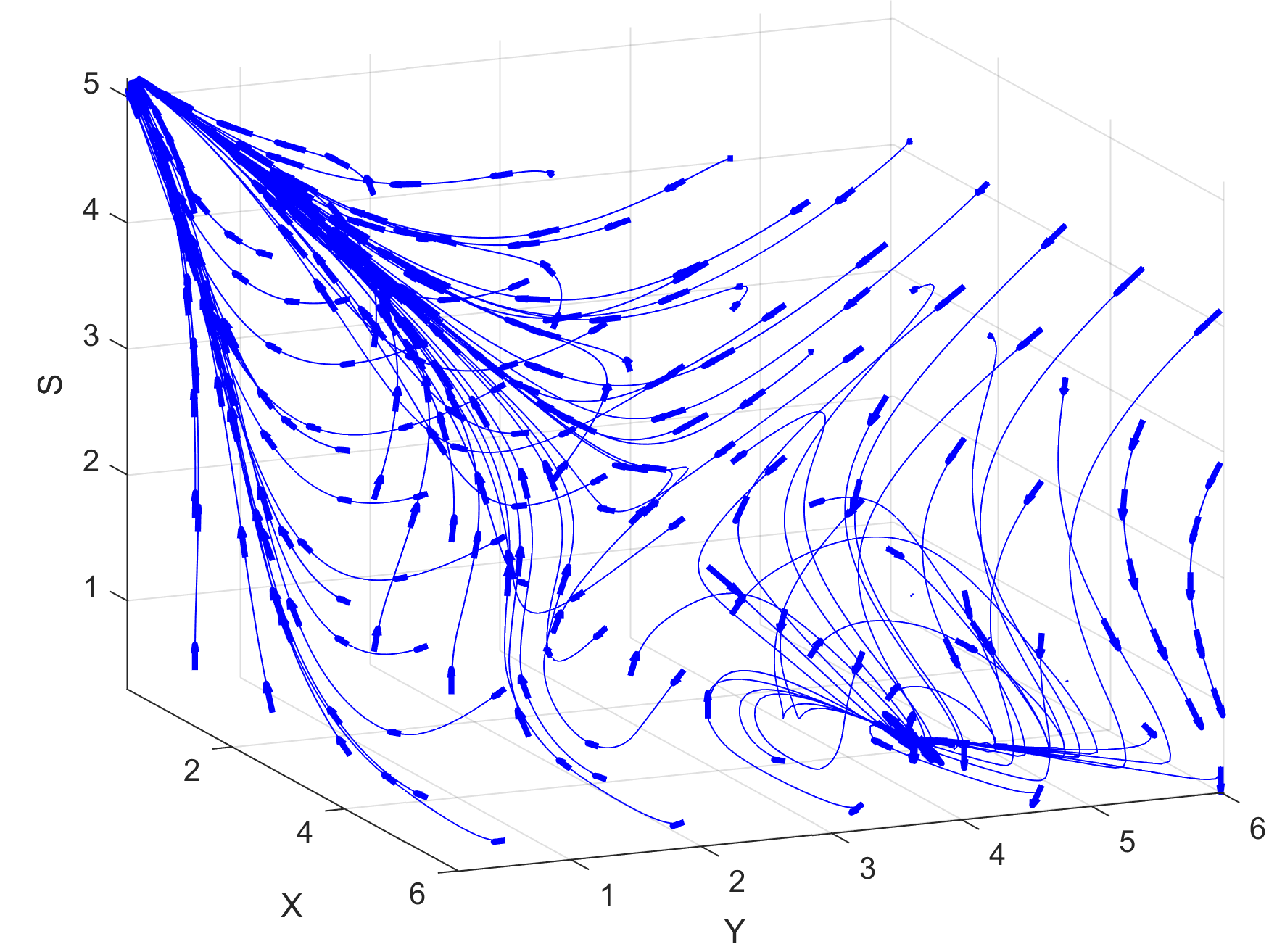}
        \caption{The phase diagram for the chemostat model \eqref{systeme-ageODE}.}
        \label{fig-age-open}
    \end{figure}
    
    Clearly, there is need for control for the stabilization of the point $(X^*,Y^*,S^*).$ We have determined by numerical means that assumption \textbf{(C)} is valid (with $\varphi=1.1)$ for system \eqref{systeme-ageODE}. Therefore, Theorem \ref{Th3} guarantees that for any $\alpha\in [0,1),$ the feedback law
    \begin{align}
        D=\frac{D^*\mu(S)X}{\mu(S^*)X^*}+\frac{\delta}{(\mu(S^*))^{1+\alpha}}\begin{cases}
            |\mu(S)-\mu(S^*)|^{1+\alpha},\quad \text{if}\quad S\leq S^*\\
            0, \quad \text{if}\quad S>S^*
        \end{cases}\label{D-example}
    \end{align} achieves global asymptotic stabilization of the equilibrium point $(X^*,Y^*,S^*).$ This is shown in Figure \ref{fig-age-closed}, where the phase diagram of chemostat model \eqref{systeme-ageODE} in closed loop with \eqref{D-example} with $\delta=1$ and $\alpha=0.5$ is shown: all solutions are attracted by the equilibrium point $(X^*,Y^*,S^*)$ .
    \begin{figure}[t]
        \centering
        \includegraphics[width=0.5\linewidth]{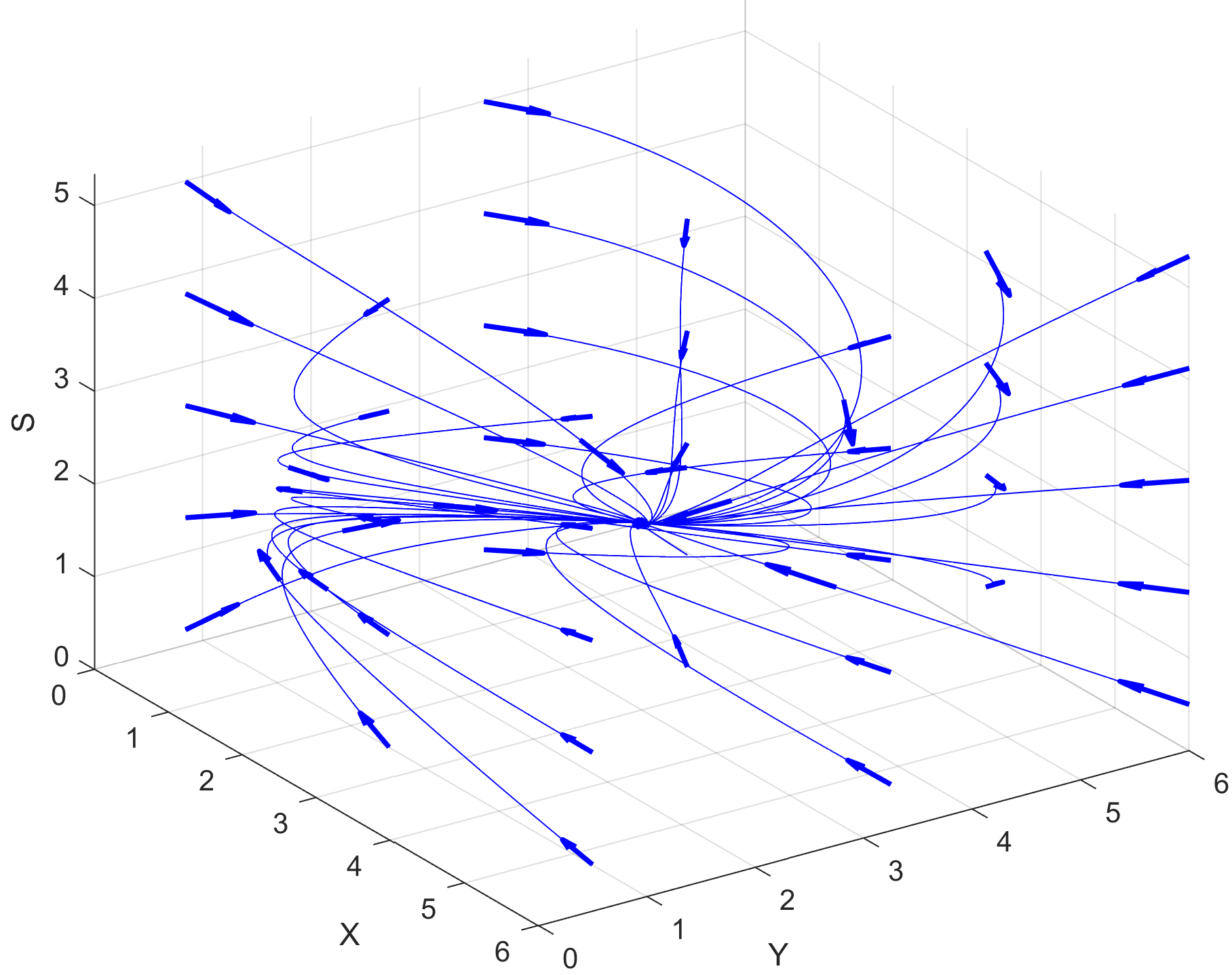}
        \caption{The phase diagram of the closed-loop system \eqref{systeme-ageODE} with \eqref{D-example}}and $\delta=1, \ \alpha=0.5$
        \label{fig-age-closed}
    \end{figure}
\end{example}

    \section{Proofs}\label{Sec-Proofs}
    We first consider the following claim which will be useful in the proofs of our main results. It follows from \eqref{X-S-1}, \eqref{g} and the fact that $\mu$ is $C^1$ on $[0,S_{\text{in}}].$ Its proof is given in Appendix \ref{Claim-Lipschitz}.
\begin{claim}\label{Claim-Lipschitz}
    There exists $A\geq 0$ such that
    \begin{align}
        |g(z)-1|\leq A|e^{z}-1|, \quad \forall z\in \mathbb{R}.\label{Glipscitz}
    \end{align}
\end{claim}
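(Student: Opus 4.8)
The plan is to reduce the inequality $|g(z)-1|\le A|e^{z}-1|$ to a bound on the increment of $\mu$ between $S^*$ and the point $S_{\mathrm{in}}/p(z)$, and then exploit that $\mu\in C^1([0,S_{\mathrm{in}}])$ together with the explicit form of $p$. Recall from \eqref{g}--\eqref{kappa} that $g(z)=\mu(S_{\mathrm{in}}/p(z))/\mu(S^*)$ with $p(z)=\kappa e^{-z}+1$ and $\kappa=(S_{\mathrm{in}}-S^*)/S^*>0$; in particular $p(0)=\kappa+1=S_{\mathrm{in}}/S^*$, so $S_{\mathrm{in}}/p(0)=S^*$ and hence $g(0)=1$, which is why the right-hand side vanishes exactly where the left-hand side does. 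Writing $S(z):=S_{\mathrm{in}}/p(z)\in(0,S_{\mathrm{in}})$, we have
\begin{align*}
|g(z)-1|=\frac{|\mu(S(z))-\mu(S^*)|}{\mu(S^*)}\le \frac{L}{\mu(S^*)}\,|S(z)-S^*|,
\end{align*}
where $L:=\sup_{S\in[0,S_{\mathrm{in}}]}|\mu'(S)|<\infty$ by the $C^1$ assumption on $[0,S_{\mathrm{in}}]$ (and boundedness of $\mu'$). So it remains to show $|S(z)-S^*|\le C|e^{z}-1|$ for some constant $C\ge 0$.

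For the second step, I compute directly: since $S^*=S_{\mathrm{in}}/(\kappa+1)$,
\begin{align*}
S(z)-S^*=\frac{S_{\mathrm{in}}}{\kappa e^{-z}+1}-\frac{S_{\mathrm{in}}}{\kappa+1}
=\frac{S_{\mathrm{in}}\,\kappa\,(1-e^{-z})}{(\kappa e^{-z}+1)(\kappa+1)}
=\frac{S_{\mathrm{in}}\,\kappa\,e^{-z}(e^{z}-1)}{(\kappa e^{-z}+1)(\kappa+1)}.
\end{align*}
Now I bound the factor $e^{-z}/(\kappa e^{-z}+1)=1/(\kappa+e^{z})$: for $z\in\mathbb{R}$ this lies in $(0,1/\kappa)$, since $\kappa+e^{z}>\kappa>0$. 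Hence
\begin{align*}
|S(z)-S^*|\le \frac{S_{\mathrm{in}}\,\kappa}{\kappa+1}\cdot\frac{1}{\kappa}\,|e^{z}-1|
=\frac{S_{\mathrm{in}}}{\kappa+1}\,|e^{z}-1|=S^*\,|e^{z}-1|.
\end{align*}
Combining the two displays gives \eqref{Glipscitz} with $A:=L S^*/\mu(S^*)$, which is finite and nonnegative.

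I do not anticipate a genuine obstacle here; the only point requiring a little care is the uniform bound on $e^{-z}/(\kappa e^{-z}+1)$ over all of $\mathbb{R}$ (equivalently on $1/(\kappa+e^{z})$), which is what makes the estimate global rather than merely local — but this is immediate from $\kappa>0$. One should also note that the global Lipschitz bound on $\mu$ is only needed on the compact interval $[0,S_{\mathrm{in}}]$ since $S(z)$ always stays in $(0,S_{\mathrm{in}})$, so no growth condition on $\mu$ outside that interval is used.
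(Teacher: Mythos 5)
Your proposal is correct and follows essentially the same route as the paper: both reduce the claim to the local Lipschitz bound $|\mu(S(z))-\mu(S^*)|\le L|S(z)-S^*|$ on $[0,S_{\mathrm{in}}]$ and then algebraically rewrite $S(z)-S^*$ to factor out $e^{z}-1$, bounding the remaining factor $\kappa/(\kappa+e^{z})$ by $1$; your final constant $A=LS^*/\mu(S^*)$ coincides with the paper's $A=Lp_0S^*/(b+D^*)$ via $p_0\mu(S^*)=b+D^*$.
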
 


    \subsection{Proof of Theorem \ref{Th1}}\label{Proof-Th1}
The proof is made using Lyapunov function. Let $\alpha\in [0,1)$ and $\delta>0$. We define for all $x\in \mathbb{R}^2$ the continuously differentiable function:
    \begin{align}
        V(x)=e^{x_1}-x_1-1+Q(x_2),\label{V(x)}
    \end{align}
where $Q$ is the function defined by
    \begin{align}
        Q(x_2):=\begin{cases}
            \frac{1}{2}x_2^2+\frac{b}{2D^*\delta}\int_{x_2}^0\frac{|g(s)-1|^{1-\alpha}}{p(s)g(s)}ds, \quad \text{if}\quad x_2\leq 0\\ \\
            R\int_0^{x_2}\frac{e^s}{g(s)^2}(e^{s}-1)ds,\quad \text{if}\quad x_2>0,
        \end{cases}\label{Q}
    \end{align} and $R$ a positive constant that will be selected later on.
        We claim that the function $Q$ given by \eqref{Q} is continuously differentiable, positive definite and  radially unbounded. Indeed, the facts that $\alpha \in [0,1)$ and $g(0)=1$ guarantee that $Q$ is differentiable at $0$ with $Q'(0)=0$. Thus the function $Q$ is differentiable on $\mathbb{R}$ with
    \begin{align}
        Q'(x_2)=\begin{cases}
            x_2-\frac{b}{2D^*\delta}\frac{|g(x_2)-1|^{1-\alpha}}{p(x_2)g(x_2)},\quad \text{if}\quad x_2\leq 0\\ \\
            \frac{R e^{x_2}}{g(x_2)^2}(e^{x_2}-1), \quad \text{if} \quad x_2>0.
        \end{cases}\label{Q'}
    \end{align}
    Moreover, since $\mu$ is positive and bounded, so is the function $g$ (see \eqref{g-p-kappa}). Consequently, the quantity $\overline{g}:=\underset{s\in \mathbb{R}}{\sup}\ g(s)>0$ is a positive constant and we obtain from \eqref{Q} and the facts that $R>0$ and $p(x_2)>0$ that
    \begin{align*}
        \lim_{x_2\rightarrow-\infty} Q(x_2)=\lim_{x_2\rightarrow-\infty}\frac{1}{2}x_2^2+\frac{b}{2D^*\delta}\int_{x_2}^0\frac{|g(s)-1|^{1-\alpha}}{p(s)g(s)}ds\geq \lim_{x_2\rightarrow-\infty}\frac{1}{2}x_2^2 =+\infty,   \\ 
        \lim_{x_2\rightarrow+\infty} Q(x_2)=\lim_{x_2\rightarrow+\infty}R\int_0^{x_2}\frac{e^s}{g(s)^2}(e^{s}-1)ds\geq\frac{R}{\overline{g}^2}\int_0^{+\infty}(e^s-1)ds=+\infty.
    \end{align*}
   which ensures that the function $Q$ is radially unbounded. 
    
    Since the function $x_1\mapsto e^{x_1}-x_1-1$ is positive definite and radially unbounded and since  $Q$ is positive definite and radially unbounded, it follows that the function $V$ defined by \eqref{V(x)} is positive definite and radially unbounded.

We recall that from \eqref{systeme-simple2}, \eqref{g-p-kappa} and \eqref{Feedback1} we have the following equations
    \begin{eqnarray}
        D(x)&=&(\kappa+1)D^*g(x_2)\frac{e^{x_1-x_2}}{p(x_2)}+u(x_2)\label{Dproof}\\
        \dot{x}_2 &=& D^*g(x_2)e^{x_1-x_2}(1-e^{x_2})+p(x_2)u(x_2)\,,\label{x_2p}
    \end{eqnarray} where
    
    \begin{align}
        u(x_2):=\begin{cases}
            \delta b|g(x_2)-1|^{1+\alpha},\quad \text{if}\quad x_2\leq 0\\ 
            0\quad \text{if}\quad x_2>0
        \end{cases}\label{u}
    \end{align} 

    From \eqref{systeme-simple2}, \eqref{x_2p} and \eqref{V(x)} we have for all $x\in \mathbb{R}^2:$
    \begin{eqnarray}
        \dot{V}(x)&=&b(g(x_2)-1)(e^{x_1}-1)-D^*g(x_2)(e^{x_1}-1)^2+Q'(x_2)p(x_2)u(x_2)\nonumber\\
        &&-D^*Q'(x_2)g(x_2)e^{x_1-x_2}(e^{x_2}-1).\label{Vpx1}
    \end{eqnarray}
    In order to make $\dot{V}$ negative, let us notice that in view of the expression \eqref{Vpx1} we can 
    \begin{itemize}
        \item 
    first set $u$ to be $0$ for the positive value of $x_2$ and make $Q'$ positive enough to compensate the first term in $\dot{V}.$
    \item and for the non-positive value of $x_2,$ we can select the function $Q$ such as its derivative decrease enough to compensate the first term in $\dot{V}.$
    \end{itemize}
    That is how we construct the above functions $Q$ and $u.$
    Using the following Young inequality
    \begin{align*}
        b(g(x_2)-1)(e^{x_1}-1)\leq \frac{b^2}{2D^*g(x_2)}(g(x_2)-1)^2
        +\frac{D^*g(x_2)}{2}(e^{x_1}-1)^2,
    \end{align*} we obtain from \eqref{Vpx1} that
    \begin{eqnarray}
        \dot{V}(x)&\leq& \frac{b^2}{2D^*g(x_2)}(g(x_2)-1)^2-\frac{D^*g(x_2)}{2}(e^{x_1}-1)^2+Q'(x_2)p(x_2)u(x_2)\nonumber\\
        &&-D^*Q'(x_2)g(x_2)e^{x_1-x_2}(e^{x_2}-1).\label{Vpx2}
    \end{eqnarray}
    We can notice that the presence of $x_1$ in the last term of \eqref{Vpx2} imposes some discussions about the growth of $x_1$ in order to exploit a potential negativity from the last term to compensate the first one. That is where we invoke assumption \textbf{(A)} to get the following
    \begin{claim}\label{Claim-Th1}
        If assumption \textbf{(A)} holds, then there exists $r\in [0,1)$ such that
        \begin{align}
    b\frac{g(x_2)-1}{g(x_2)}\geq -rD^*, \quad \forall x_2\geq 0.\label{AssumpA-equiv}
\end{align}
    \end{claim}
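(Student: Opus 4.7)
The plan is to reformulate the target inequality \eqref{AssumpA-equiv} as a bound on the ratio $b\mu(S^*)/\mu(S_{\text{in}}/p(x_2))$, then extract a uniform positive lower bound on $\mu$ over the compact interval $[S^*,S_{\text{in}}]$ from assumption \textbf{(A)}, and finally pick an explicit $r\in[0,1)$ witnessing the desired inequality.

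First I would identify the range of the argument of $\mu$ in the definition of $g(x_2)$ when $x_2\geq 0$. Since $\kappa=(S_{\text{in}}-S^*)/S^*$, a direct computation gives $S_{\text{in}}/p(0)=S_{\text{in}}/(1+\kappa)=S^*$, while $p(x_2)\searrow 1$ as $x_2\to+\infty$, so $S_{\text{in}}/p(x_2)\in[S^*,S_{\text{in}})$ for all $x_2\geq 0$. Rewriting the left-hand side of \eqref{AssumpA-equiv} as
\begin{align*}
b\,\frac{g(x_2)-1}{g(x_2)}=b-\frac{b}{g(x_2)}=b-\frac{b\mu(S^*)}{\mu(S_{\text{in}}/p(x_2))},
\end{align*}
I would then invoke \textbf{(A)}: since $S\mapsto p_0\mu(S)$ is continuous on the compact interval $[S^*,S_{\text{in}}]$, the minimum $m:=\min_{S\in[S^*,S_{\text{in}}]}p_0\mu(S)$ exists and satisfies $m>b$ by \textbf{(A)}. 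Combined with the equilibrium relation \eqref{eqpt-mu1}, namely $p_0\mu(S^*)=b+D^*$, this yields
\begin{align*}
\frac{b\mu(S^*)}{\mu(S_{\text{in}}/p(x_2))}\leq \frac{bp_0\mu(S^*)}{m}=\frac{b(b+D^*)}{m},\qquad \forall x_2\geq 0,
\end{align*}
and hence $b(g(x_2)-1)/g(x_2)\geq -b(b+D^*-m)/m$.

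Finally I would set $r:=\max\{0,\,b(b+D^*-m)/(mD^*)\}$, which by construction makes \eqref{AssumpA-equiv} hold. The only remaining point is to verify $r<1$: this is immediate if $m\geq b+D^*$, and otherwise follows from the chain $b(b+D^*-m)<mD^*\Leftrightarrow(m-b)(b+D^*)>0$, which is guaranteed by $m>b$ and $D^*>0$. I do not anticipate any serious obstacle here; the proof is a short computation and the only nontrivial moment is spotting the rewrite $b(g-1)/g=b-b/g$, which is what makes assumption \textbf{(A)} enter as a lower bound on $\mu$ directly, without any further estimates on $g'$ or on the geometry of $p(\cdot)$.
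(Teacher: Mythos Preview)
Your proof is correct and follows essentially the same approach as the paper: both extract from \textbf{(A)} the strict inequality $m:=\min_{S\in[S^*,S_{\text{in}}]}p_0\mu(S)>b$, combine it with $p_0\mu(S^*)=b+D^*$, and convert this into the required lower bound on $g$. Your version is marginally more explicit in that you write down a concrete formula for $r$ and verify $r<1$ via the equivalence $b(b+D^*-m)<mD^*\Leftrightarrow (m-b)(b+D^*)>0$, whereas the paper simply asserts existence of such an $r$ by continuity; the underlying argument is the same.
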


Its proof is given in Appendix \ref{Claim-Th1}.
    
    Let $r\in [0,1)$ be the constant for which \eqref{AssumpA-equiv} holds. Since $r\in [0,1),$ there exists a constant $c>0$ such that 
    \begin{align}
        r+e^{-c}<1.\label{r-ce}
    \end{align} 
We distinguish the following cases 

\begin{figure}[t]
    \centering
\begin{tikzpicture}[line cap=round,line join=round,>=triangle 45,x=1.0cm,y=1.0cm]
\draw[->,color=black] (-0.7,0) -- (8.0,0);
\foreach \x in {-1,1,2,3,4,5,6,7,8,9,10}
\draw[shift={(\x,0)},color=black] (0pt,-2pt);
\draw[->,color=black] (0,-0.5) -- (0,10.71);
\clip(-1.02,-1.05) rectangle (8.0,11.71);
\draw (-0.70,1.80) node[anchor=north west] {$ S^* $};
\draw (-0.79,10.21) node[anchor=north west] {$ S_{\text{in}} $};
\draw (3.70,10.50) node[anchor=north west] {$ \textcolor{purple}{\text{Case 1}} $};
\draw (0.01,8.10) node[anchor=north west] {$ \textcolor{red}{\text{Case 2} } $};
\draw (-0.70,11.0) node[anchor=north west] {$ S $};
\draw (-0.38,0) node[anchor=north west] {$ 0 $};
\draw (7.5,0.03) node[anchor=north west] {$ X $};
\draw (3.7,9.84) node[anchor=north west] {\parbox{4.09 cm}{Current substrate is both in $\textcolor{purple}{\text{excess}}$ of the  equilibrium level, $S>S^*$ and provides $\textcolor{purple}{\text{sufficient nurishnment}}$ even if the current biomass $X$ exhibits underpopulation $S\geq  S_{\text{in}}-\frac{\mu(S^*)e^c}{D^*}X$}};
\draw (0.07,7.46) node[anchor=north west] {\parbox{3.36 cm}{Current\\ substrate \\ is in $\textcolor{red}{\text{excess}}$\\ of the  equili-\\brium level $S^*$\\  but $\textcolor{red}{\text{undernuri-}}$\\ $\textcolor{red}{\text{shes}}$ the currently underpopulated\\ biomass $X,$\\ $S<  S_{\text{in}}-\frac{\mu(S^*)e^c}{D^*}X$}};
\draw (0.7,1.3) node[anchor=north west] {\parbox{6.5 cm}{$ \textcolor{blue}{\text{Case 3}} $ Biomass is $\textcolor{blue}{\text{underfed}}$ relative to the equilibrium  $S^*,$ $S\leq S^*$ }};
\draw (0,10)-- (5.7,1.38);
\draw (0,1.3)-- (9.75,1.35);
\end{tikzpicture}
\caption{Physical interpretations of Cases $1,$ $2,$ $3.$}
    \label{fig:physical-int}
\end{figure}
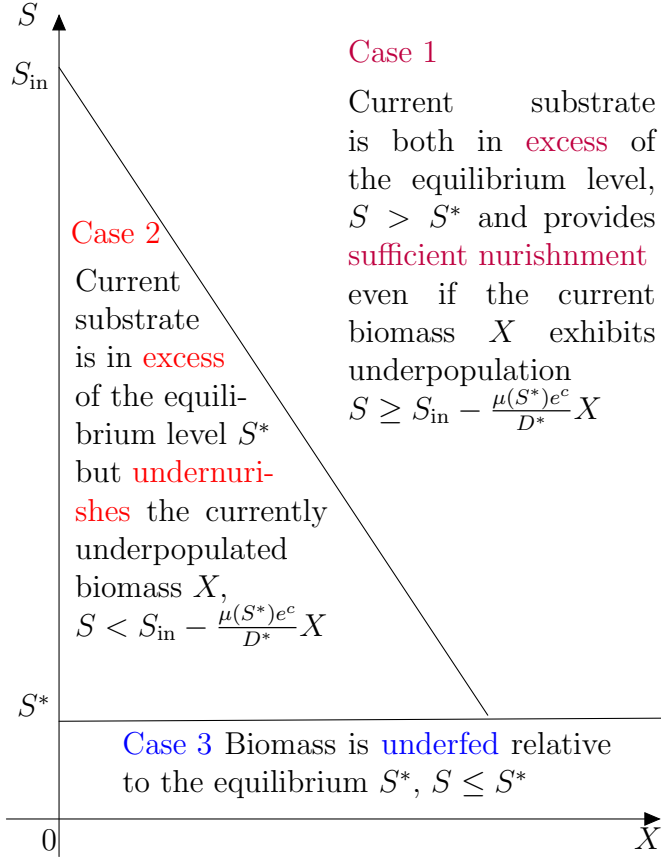  

\begin{enumerate}
\item[]Case $1:$ $x_2>0$ and $x_1\geq -c.$ In view of \eqref{x_1-x_2}, this case corresponds to the current substrate being both {\em in excess of the equilibrium level}, $S>S^*$, and {\em providing sufficient nurishnment even if the current biomass $X$ exhibits underpopulation}, $S\geq  S_{\text{in}}-\frac{\mu(S^*)e^c}{D^*}X$. 
In this case, in view of  \eqref{u} and \eqref{Vpx2} we get:
        \begin{align*}
            \dot{V}(x)\leq \frac{b^2}{2D^*g(x_2)}(g(x_2)-1)^2-\frac{D^*g(x_2)}{2}(e^{x_1}-1)^2
            -D^*Q'(x_2)g(x_2)e^{x_1-x_2}(e^{x_2}-1).
        \end{align*}
        Thus, from \eqref{Q'} it holds that
        \begin{align*}
            \dot{V}(x)\leq \frac{b^2}{2D^*g(x_2)}(g(x_2)-1)^2-\frac{D^*g(x_2)}{2}(e^{x_1}-1)^2
            -\frac{RD^*}{g(x_2)}e^{x_1}(e^{x_2}-1)^2.
        \end{align*}
        So, using \eqref{Glipscitz} and the fact that $x_1\geq -c$ we obtain
        \begin{align*}
            \dot{V}(x)
            \leq \frac{A^2b^2}{2D^*g(x_2)}(e^{x_2}-1)^2-\frac{D^*g(x_2)}{2}(e^{x_1}-1)^2
            -\frac{RD^*}{g(x_2)}e^{-c}(e^{x_2}-1)^2.
        \end{align*}
        By selecting the constant $R$ large enough that  $R>\frac{A^2b^2}{{D^*}^2}e^{c},$ it holds that
        \begin{align}
            \dot{V}(x)\leq -\frac{D^*g(x_2)}{2}(e^{x_1}-1)^2-\frac{RD^*}{2g(x_2)}e^{-c}(e^{x_2}-1)^2.\label{Case2}
        \end{align}
        \item[]Case $2:$ $x_2>0$ and $x_1<-c.$ In the original coordinates, this case corresponds to the current substrate being {\em in excess of the equilibrium level}, $S>S^*$, but nevertheless {\em undernurishing the currently underpopulated biomass $X$}, $S<  S_{\text{in}}-\frac{\mu(S^*)e^c}{D^*}X$
In this case it holds that $D^*(e^{x_1}-1)<D^*(e^{-c}-1)$, and consequently
        \begin{align*}
            b\frac{g(x_2)-1}{g(x_2)}-D^*(e^{x_1}-1)>b\frac{g(x_2)-1}{g(x_2)}-D^*(e^{-c}-1).
        \end{align*}
        By virtue of \eqref{AssumpA-equiv} and \eqref{r-ce}, we get
        \begin{align}
            b\frac{g(x_2)-1}{g(x_2)}-D^*(e^{x_1}-1)>D^*(1-r-e^{-c})>0.\label{bg}
        \end{align}
        We have from \eqref{u} and \eqref{Vpx1} that 
        \begin{align*}
          \dot{V}(x)=b(g(x_2)-1)(e^{x_1}-1)-D^*g(x_2)(e^{x_1}-1)^2
          -D^*Q'(x_2)g(x_2)e^{x_1-x_2}(e^{x_2}-1).
        \end{align*}
        In view of the expression \eqref{Q'}, it holds that
        \begin{eqnarray*}
            \dot{V}(x)&=&b(g(x_2)-1)(e^{x_1}-1)-D^*g(x_2)(e^{x_1}-1)^2-\frac{RD^*}{g(x_2)}e^{x_1}(e^{x_2}-1)^2\\
            &=&-g(x_2)\left[b\frac{(g(x_2)-1)}{g(x_2)}-D^*(e^{x_1}-1)\right](1-e^{x_1})-\frac{RD^*}{g(x_2)}e^{x_1}(e^{x_2}-1)^2.
        \end{eqnarray*}
        This combining with \eqref{bg} and the fact that $x_1<-c<0,$ ensure that
        \begin{align}
            \dot{V}(x)
            &\leq -g(x_2)D^*(1-r-e^{-c})(1-e^{x_1})-\frac{RD^*}{g(x_2)}e^{x_1}(e^{x_2}-1)^2.\label{Case3}
        \end{align}
    
        \item[]Case $3:$ $x_2\leq 0.$ This is equivalent to $S\leq S^*$ and it means that biomass is {\em underfed} relative to the equilibrium $S^*$. In this case we obtain from \eqref{u} and \eqref{Vpx2}:
        \begin{align*}
            \dot{V}(x)&\leq \frac{b^2}{2D^*g(x_2)}(g(x_2)-1)^2-\frac{D^*g(x_2)}{2}(e^{x_1}-1)^2-Q'(x_2)D^*g(x_2)e^{x_1-x_2}(e^{x_2}-1)\\
            &+\delta bQ'(x_2)p(x_2)|g(x_2)-1|^{1+\alpha}.
        \end{align*}
        And using \eqref{Q'}, this yields the following estimate
        \begin{eqnarray*}
            \dot{V}(x)&\leq& -\frac{D^*g(x_2)}{2}(e^{x_1}-1)^2-x_2D^*g(x_2)e^{x_1-x_2}(e^{x_2}-1)\\
            &&+\frac{b}{2\delta}\frac{|g(x_2)-1|^{1-\alpha}}{p(x_2)}e^{x_1-x_2}(e^{x_2}-1)+\delta b x_2p(x_2)|g(x_2)-1|^{1+\alpha}.
        \end{eqnarray*}
        Since $x_2\leq 0,$ and $p(x_2)>0$ we obtain
        \begin{align}
            \dot{V}(x)&\leq -\frac{D^*g(x_2)}{2}(e^{x_1}-1)^2-|x_2|D^*g(x_2)e^{x_1} |e^{-x_2}-1|.\label{Case1}
        \end{align}
        
    \end{enumerate}
    Combining all cases, we conclude from \eqref{Case2}, \eqref{Case3}  and \eqref{Case1}  that $\dot{V}(x)<0$ for all $x\in \mathbb{R}^2\setminus\{0\}.$ Therefore, by Lyapunov's theorem $0\in \mathbb{R}^2$ is globally asymptotically stable for the closed-loop system \eqref{systeme-simple2} with \eqref{Feedback1}. 

    Let now  $\alpha\in (0,1)$. Since $\alpha>0$ and $g \in C^1(\mathbb{R})$, $\ g(0)=1$, it follows that the function $u$ defined by \eqref{u} is differentiable on $\mathbb{R}$ and its derivative is given by the following continuous function
    \begin{align}
       u'(x_2)=
       \delta b \begin{cases}
            (1+\alpha) g'(x_2)\text{sgn}(g(x_2)-1)|g(x_2)-1|^{\alpha}, \ x_2\leq 0\\
            0,\ x_2>0
        \end{cases}\label{u'} 
    \end{align} where 
    \begin{align*}
    \text{sgn}(g(x_2)-1)=\begin{cases}
        -1, \ g(x_2)\leq 1\\
        1, \ g(x_2)>1.
    \end{cases}
    \end{align*} It follows that the vector field of the closed-loop system \eqref{x_1-simple}-\eqref{x_2p} is continuously differentiable on $\mathbb{R}^2$ and then its local exponential stability at the origin $0\in \mathbb{R}^2$ is guaranteed by the exponential stability of its linearization at $0\in \mathbb{R}^2$.  Using \eqref{g}, \eqref{p}, \eqref{u} and \eqref{u'}, the Jacobian matrix of the closed-loop system \eqref{x_1-simple}-\eqref{x_2p} at $0\in \mathbb{R}^2$ is given by \begin{align*}
        \begin{pmatrix}
            &-D^*\quad &bg'(0)\\
            &0\quad &-D^*
        \end{pmatrix}.
    \end{align*}
    This matrix is Hurwitz with a unique eigenvalue $-D^*<0.$ So the linearization of the closed-loop system \eqref{x_1-simple}-\eqref{x_2p} at $0\in \mathbb{R}^2$ is exponentially stable. Therefore when $\alpha>0$, the feedback law \eqref{Feedback1} achieves local exponential stabilization of the system \eqref{systeme-simple2} at  $0\in \mathbb{R}^2$. 
      
    There is only left to prove that the proposed feedback law is locally Lipschitz. To that aim, recall that the feedback law \eqref{Feedback1} can be written as 
    \begin{align*}
        D(x)=(\kappa+1)D^*g(x_2)\frac{e^{x_1-x_2}}{p(x_2)}+u(x_2),
    \end{align*}
    where the term $u(x_2)$ was defined in \eqref{u}. In view of the assumed regularity on $p$ and $g$, and recalling that $p(x_2)>1$, the first term in this expression is continuously differentiable, hence locally Lipschitz. Moreover, the function $u$ defined in \eqref{u} is a continuous concatenation of two continuously differentiable functions and, as such, is locally Lipschitz as well. Thus, the feedback law $D(x)$ is locally Lipschitz, as the sum of two locally Lipschitz functions.

    \subsection{Proof of Theorem \ref{Th2}}\label{Proof-Th2}
    The proof is made by contradiction. Suppose that there exists a locally Lipschitz feedback law $D(x)\geq 0$ that achieves boundedness of solutions for system \eqref{systeme-simple2}. Define 
    \begin{align}
        \theta:=b-p_0\mu(\overline{S}), \label{theta-def}
    \end{align}
    and notice that, thanks to the assumption \eqref{AssumpTh2}, $\theta>0$. The equality \eqref{theta-def} in conjunction with \eqref{eqpt-mu1} and \eqref{g-p-kappa} guarantee the existence of a constant $\beta>0$ such that
    \begin{align}
        (b+D^*)g(\beta)=b-\theta, \label{bDg}
    \end{align} 
    and the assumption \eqref{AssumpTh2*} with \eqref{eqpt-mu1} and \eqref{g-p-kappa} ensure that
    \begin{align}
        g'(x_2)\leq 0, \quad \forall x_2\geq \beta,\label{g-decrease}
    \end{align}
    which in turn guarantees that
    \begin{align}
        g(x_2)\leq g(\beta),\quad  \forall x_2\geq \beta\label{bDx_21}
    \end{align} and thus
    \begin{align}
        (b+D^*)g(x_2)\leq (b+D^*)g(\beta)\quad  \forall x_2\geq \beta.\label{bDx_2}
    \end{align}
    Since by assumption $\mu(S_{\text{in}})<\mu(\overline{S})$ (making $S_{\text{in}}\neq \overline{S}$: see \eqref{AssumpTh2*}), there exists in particular $ \overline{x}_2>\beta$ such that $g(\overline{x}_2)< g(\beta)$, meaning that
    \begin{align}
        (b+D^*)g(\overline{x}_2)< (b+D^*)g(\beta).\label{x2beta}
    \end{align} 
    In view of \eqref{bDg}, the inequality \eqref{x2beta} yields
    \begin{align}
        \theta+(b+D^*)g(\overline{x}_2)<b.\label{assp-theta}
    \end{align}
    Moreover, combining \eqref{bDx_2} with the equality \eqref{bDg} we have $
        (b+D^*)g(x_2)\leq b-\theta$ for all $x_2\geq \beta$.
    This shows that, for all $x_2\geq \beta$,
    \begin{align}
        b(g(x_2)-1)\leq -(\theta+D^*g(x_2)).\label{bD*}
    \end{align}
    Thus, we have from \eqref{x_1-simple} and \eqref{bD*} that for all $x=(x_1,x_2)\in \mathbb{R}^2$ with $x_2\geq \beta$,
    \begin{align*}
        \dot{x}_1&=b(g(x_2)-1)+D^*g(x_2)(1-e^{x_1})\leq -\theta-D^*g(x_2)e^{x_1}.
    \end{align*}
    Since $D^*g(x_2) e^{x_1}\geq 0,$ we obtain for all $(x_1,x_2)\in \mathbb{R}^2$ with $x_2\geq \beta:$
    \begin{align}
       \dot{x}_1 \leq -\theta.\label{x_1p-theta}
    \end{align}
    Using \eqref{x_2-simple}, \eqref{g-decrease} and the fact that $D\geq 0,$ we obtain for all $x\in \mathbb{R}^2$ with $x_2\geq \beta:$
    \begin{align}
        g'(x_2)\dot x_2&=D g'(x_2)p(x_2)-g'(x_2)p(x_2)D^*g(x_2)e^{x_1}\nonumber\\
        &\leq |g'(x_2)|p(x_2)D^*g(x_2)e^{x_1}\label{g'p*}
    \end{align}
    Thus using \eqref{g-p-kappa}, we have $p(x_2)\leq (\kappa+1)$ for all $x_2\geq 0,$ and we get from \eqref{g'p*} that for all $(x_1,x_2)\in \mathbb{R}^2$ with $x_2\geq \beta$
    \begin{align}
        g'(x_2)\dot x_2
        \leq D^*(\kappa+1)\max\left\{|g'(x_2)|g(x_2), \ x_2\geq \beta\right\}e^{x_1}\label{g'1}
    \end{align}
    Since from \eqref{bDx_21} and \eqref{bDg}, we have that
    \begin{align*}
        g(x_2)\leq g(\beta)=\frac{b-\theta}{b+D^*}\leq \frac{b}{b+D^*}\quad \forall x_2\geq \beta,
    \end{align*} it follows from \eqref{g'1} that
    \begin{align}
      g'(x_2)\dot x_2 &\leq \frac{D^*b}{b+D^*}(\kappa+1)\max\left\{|g'(x_2)|, \ x_2\geq \beta\right\}e^{x_1}.\label{g'2}
    \end{align}
    And by virtue of \eqref{eqpt-mu1},\eqref{X-S-1} and \eqref{g-p-kappa}, it holds that
    \begin{align*}
        g'(x_2)=\frac{p_0 S_{\text{in}}}{b+D^*}\times \frac{\kappa e^{x_2}}{(\kappa+e^{x_2})^2}\mu'\left(\frac{S_{\text{in}}e^{x_2}}{\kappa+e^{x_2}}\right).
    \end{align*}
    Noticing that for all $x_2\geq \beta $
    \begin{align*}
        \frac{e^{x_2}}{\kappa+e^{x_2}}\leq 1, \quad \text{and}\quad \frac{1}{\kappa+e^{x_2}}\leq \frac{1}{\kappa+e^{\beta}} 
    \end{align*}
    we get for all $x_2\geq \beta,$
    \begin{align}
        |g'(x_2)|\leq \frac{p_0 \kappa S_{\text{in}}}{(b+D^*)(\kappa+e^{\beta})}\left|\mu'\left(\frac{S_{\text{in}}e^{x_2}}{\kappa+e^{x_2}}\right)\right|.\label{g'3}
    \end{align}
    And by setting $\xi=\frac{e^{x_2}}{\kappa+e^{x_2}},$ we obtain for all $x_2\geq \beta$
    \begin{align}
        |g'(x_2)|\leq \frac{p_0 \kappa S_{\text{in}}}{(b+D^*)(\kappa+e^{\beta})}M,\label{g'4}
    \end{align} where
    \begin{align}
        M:=\max\left\{|\mu'\left(S_{\text{in}}\xi\right)|, \ \frac{e^{\beta}}{\kappa+e^{\beta}}\leq \xi\leq 1\right\}.\label{M}
    \end{align}
    Combining \eqref{g'2} with \eqref{g'4}, we obtain for all $x_2\geq \beta$
    \begin{align}
      g'(x_2)\dot x_2  \leq  (\kappa+1)\frac{GbD^*}{b+D^*}e^{x_1}\label{g'x2p}
    \end{align} where 
    \begin{align*}
        G:=1+\frac{p_0\kappa S_{\text{in}}}{(D^*+b)(\kappa+e^{\beta})}M.
    \end{align*}
    Consider the solution of the closed-loop system with initial condition $(x_1(0),x_2(0))\in \mathbb{R}^2$ with 
    \begin{subequations}
        \begin{align}
       x_2(0)=\overline{x}_2,\\
       \theta+(b+D^*)g(\overline{x}_2)+(\kappa+1)\frac{GbD^*}{\theta}e^{x_1(0)}\leq b
    \end{align} \label{initial-cond}
    \end{subequations}
    where $\overline{x}_2$ is the one given in \eqref{assp-theta}.
    
    We next make the following claim.
    \begin{claim}\label{claim-x-2}
        The inequality $x_2(t)\geq \beta$ holds for all $t\geq 0.$
    \end{claim}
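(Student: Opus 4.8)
The plan is to argue by contradiction, and the whole argument lives on a ``first exit'' window. Under the standing hypothesis of the proof of Theorem~\ref{Th2} the closed-loop solution is bounded, hence defined on all of $[0,\infty)$, so it makes sense to suppose that $x_2(t)<\beta$ for some $t>0$. I would then set $T:=\inf\{t>0:\ x_2(t)\le \beta\}$; since $x_2$ is continuous and $x_2(0)=\overline{x}_2>\beta$, one has $T>0$, $x_2(T)=\beta$, and $x_2(s)\ge\beta$ for every $s\in[0,T]$. The point of introducing $T$ is that every pointwise estimate established above under the hypothesis $x_2\ge\beta$ --- in particular \eqref{x_1p-theta} and \eqref{g'x2p} --- is then valid along the trajectory on the whole of $[0,T]$.

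On $[0,T]$ I would first integrate \eqref{x_1p-theta}, which gives $x_1(t)\le x_1(0)-\theta t$ and hence $e^{x_1(t)}\le e^{x_1(0)}e^{-\theta t}$. Plugging this into \eqref{g'x2p} and recalling that $\frac{d}{dt}g(x_2(t))=g'(x_2(t))\dot{x}_2(t)$ (valid since $g\in C^1$) yields
\begin{align*}
\frac{d}{dt}g(x_2(t))\ \le\ (\kappa+1)\,\frac{GbD^*}{b+D^*}\,e^{x_1(0)}\,e^{-\theta t},\qquad t\in[0,T].
\end{align*}
Integrating from $0$ to $T$, using $x_2(0)=\overline{x}_2$ and the \emph{strict} bound $\int_0^T e^{-\theta t}\,dt=\frac{1-e^{-\theta T}}{\theta}<\frac1\theta$ (strict because $T\in(0,\infty)$, $\theta>0$, and the prefactor $(\kappa+1)\frac{GbD^*}{b+D^*}e^{x_1(0)}$ is strictly positive as $b>0$ by \eqref{AssumpTh2}), gives $(b+D^*)g(x_2(T))<(b+D^*)g(\overline{x}_2)+(\kappa+1)\frac{GbD^*}{\theta}e^{x_1(0)}$. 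By the second relation in \eqref{initial-cond} the right-hand side is $\le b-\theta$, which equals $(b+D^*)g(\beta)$ by \eqref{bDg}. Hence $g(x_2(T))<g(\beta)$, which contradicts $x_2(T)=\beta$. Therefore no such $t$ exists and $x_2(t)\ge\beta$ for all $t\ge 0$, which is Claim~\ref{claim-x-2}.

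I expect the only genuinely delicate point to be the bookkeeping that converts the \emph{non-strict} data in \eqref{initial-cond} into the \emph{strict} inequality $g(x_2(T))<g(\beta)$ needed to contradict $x_2(T)=\beta$: this is precisely why one must keep $\int_0^T e^{-\theta t}\,dt<1/\theta$ strict (legitimate because $T$ is finite and the integrand carries a strictly positive coefficient) rather than simply bounding the integral by $1/\theta$. One also has to be scrupulous that \eqref{x_1p-theta}--\eqref{g'x2p} are invoked only on the window $[0,T]$, where $x_2\ge\beta$. Beyond these care points the argument needs no new estimate --- all the analytic work was already done in deriving \eqref{x_1p-theta} and \eqref{g'x2p}, and $G$, $\theta$, $\beta$ are the constants fixed earlier in the proof of Theorem~\ref{Th2}.
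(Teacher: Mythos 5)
Your proof is correct and follows essentially the same route as the paper's: same first-exit time $T$, same use of \eqref{x_1p-theta} to get the exponential decay of $e^{x_1}$, same integration of \eqref{g'x2p}, and same leveraging of the strict bound $1-e^{-\theta T}<1$ together with \eqref{initial-cond} and \eqref{bDg} to produce the contradiction $g(x_2(T))<g(\beta)$ at $x_2(T)=\beta$. The only cosmetic difference is that you define $T$ via $x_2(t)\le\beta$ rather than $x_2(t)<\beta$, which slightly shortens the identification $x_2(T)=\beta$; the substance is identical.
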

    \begin{proof}
    Indeed, if there exists $t>0$ with $x_2(t)<\beta,$ then we may define 
    \begin{align}
        T:=\inf\{t>0:\ x_2(t)<\beta\}.\label{T}
    \end{align}
    Continuity of the solution and definition \eqref{T} implies that 
    \begin{align}
        x_2(T)\leq \beta.\label{x_2-T<}
    \end{align} 
    We have $T\geq 0$ from definition \eqref{T} and since from \eqref{x2beta} and \eqref{initial-cond},  $x_2(0)=\overline{x}_2>\beta,$  we get that $T>0$. Moreover, the minimality of \eqref{T} implies that
    \begin{align}
        x_2(t)\geq \beta, \quad \forall t\in [0,T).\label{x_2T>}
    \end{align} 
    In particular this ensures that $
        \lim_{t\rightarrow T^+} x_2(t)\geq \beta$. 
    By continuity of the solution, it follows that $x_2(T)\geq \beta$.
    Combining this with  \eqref{x_2-T<}, we obtain 
    \begin{align}
        x_2(T)=\beta.\label{x2=beta}
    \end{align} 
    So now in view of \eqref{x_2T>} and \eqref{x2=beta}, $x_2(t)\geq \beta$ for all $t\in [0,T].$ We then obtain from \eqref{x_1p-theta} and by applying comparison Lemma 3.4 in \cite{Khalil2002-Nonlinear}:
    \begin{align}
        x_1(t)\leq x_1(0)-\theta t\label{x_1theta}, \quad \forall t\in [0,T].
    \end{align}
    Inequality \eqref{x_1theta} in conjunction with \eqref{g'x2p} give for all $t\in[0,T]:$
    \begin{align*}
        g'(x_2(t))\dot{x}_2(t)\leq (\kappa+1)\frac{GbD^*}{b+D^*}e^{(x_1(0)-\theta t)},
    \end{align*}
    which is rewritten as
    \begin{align}
        \frac{d}{dt}g(x_2(t))\leq (\kappa+1)\frac{GbD^*}{b+D^*}e^{(x_1(0)-\theta t)}, \quad \forall t\in [0,T].\label{g'x2t}
    \end{align}
    The differential inequality \eqref{g'x2t} directly implies by using comparison Lemma 3.4 in \cite{Khalil2002-Nonlinear} that for all $t\in [0,T]$
    \begin{align*}
        g(x_2(t))\leq g(x_2(0))+(\kappa+1)\frac{GbD^*}{\theta(b+D^*)}e^{x_1(0)}(1-e^{-\theta t}). 
    \end{align*}
Since $(1-e^{-\theta t})<1$ for all $t\in [0,T],$ it holds that
\begin{align}
        g(x_2(t))< g(x_2(0))+(\kappa+1)\frac{GbD^*}{\theta(b+D^*)}e^{x_1(0)}, \quad \forall t\in [0,T]. \label{gx_2t}
    \end{align}
The fact that from \eqref{initial-cond},
    \begin{align*}
        x_2(0)=\overline{x}_2,\quad \theta+(b+D^*)g(\overline{x}_2)+(\kappa+1)\frac{GbD^*}{\theta}e^{x_1(0)}\leq b
    \end{align*} in conjunction with \eqref{gx_2t} give 
    \begin{align*}
        \theta+(b+D^*)g(x_2(t))<b, \quad \forall t\in [0,T].
    \end{align*} 
    This contradicts the fact that from \eqref{bDg}, \eqref{x2=beta}
    \begin{align*}
        (b+D^*)g(\beta)=b-\theta \quad \text{and}\quad  x_2(T)=\beta.
    \end{align*}
    Therefore the inequality $x_2(t)\geq \beta$ holds for all $t\geq 0.$ 
    \end{proof}
    Since the inequality \eqref{x_1p-theta} holds for all $t$ such that $x_2(t)\geq \beta,$ and the Claim \ref{claim-x-2} ensures that $x_2(t)\geq \beta$ for all $t\geq 0,$ the inequality  \eqref{x_1p-theta} holds for all $t\geq 0.$  Using comparison Lemma 3.4 in \cite{Khalil2002-Nonlinear} again, this implies that
        $x_1(t)\leq x_1(0)-\theta t$ for all $t\geq 0$.
    Thus,
    \begin{align*}
        \lim_{t\rightarrow+\infty} x_1(t)=-\infty.
    \end{align*}
    This is a contradiction with the assumption that the solution is bounded and the proof is complete.
    
    \subsection{Proof of Theorem \ref{Th3}}\label{Proof-Th3}
Let $\alpha\in [0,1)$ and $\delta>0$. We define for all $x\in \mathbb{R}^3$ the continuously differentiable function:
    \begin{align}
        V(x)=e^{x_1}-x_1-1+\frac{B}{b+D^*}(e^{x_2}-x_2-1)+Q(x_3), \label{V*}
    \end{align}
where $Q$ is the continuously differentiable function defined by:
    \begin{align}
        Q(x_3):=\begin{cases}
            \frac{1}{2}x_3^2+\frac{\Omega}{2D^*\delta}\int_{x_3}^0\frac{|g(s)-1|^{1-\alpha}}{p(s)g(s)}ds\quad \text{if} \quad x_3\leq 0\\ \\
            R\int_0^{x_3}\frac{e^s}{g(s)^2}(e^s-1)ds, \quad \text{if}\quad x_3>0
        \end{cases}\label{Q*}
    \end{align} and $B, \Omega, R$ are positive constants that will be defined later on. Proceeding in the same way as in the proof of Theorem \ref{Th1}, we can guarantee that the function $Q$ is continuously differentiable, positive definite and radially unbounded such that

    \begin{align}
        Q'(x_3)=\begin{cases}
            x_3-\frac{\Omega}{2D^*\delta}\frac{|g(x_3)-1|^{1-\alpha}}{p(x_3)g(x_3)}, \quad \text{if}\quad x_3\leq 0\\ \\
            \frac{R e^{x_3}}{g(x_3)^2}(e^{x_3}-1), \quad \text{if}\quad x_3>0.
        \end{cases}\label{Q'*}
    \end{align}
    
    Since the function $x_1\mapsto e^{x_1}-x_1-1,$ is positive definite and radially unbounded as well as the function $x_2\mapsto \frac{B}{b+D^*}(e^{x_2}-x_2-1)$ and the function $Q,$ it follows that the function  $V$ defined by \eqref{V*} is positive definite and radially unbounded.


     Recall that we have from \eqref{systeme-age-stud}, \eqref{feedback2} and \eqref{u*} the following equations:
    \begin{align}
        D&=(\kappa+1)D^*g(x_3)\frac{e^{x_1-x_3}}{p(x_3)}+u(x_2)\label{D-age}\\
        \dot{x}_3&=D^*g(x_3)e^{x_1-x_3}(1-e^{x_3})+p(x_3)u(x_2).\label{x_3-age-close}
    \end{align} where
    \begin{align}
        u(x_2):=\begin{cases}
            \delta|g(x_3)-1|^{1+\alpha}, \quad \text{if}\quad x_3\leq 0\\
            0, \quad \text{if}\quad x_3>0
        \end{cases}\label{u*}
    \end{align}
    

    From, \eqref{systeme-age-stud}, \eqref{x_3-age-close} and \eqref{V*} we have for all $x\in \mathbb{R}^3$
    \begin{eqnarray}
        \dot{V}(x)&=&(b+D^*)g(x_3)(e^{x_1}-1)(e^{x_2}-1)+b(g(x_3)-1)(e^{x_1}-1)+B\lambda(1-g(x_3))(e^{x_2}-1)\nonumber\\
        &&-B(\lambda e^{-x_2}+g(x_3))(e^{x_2}-1)^2-D^*g(x_3)(e^{x_1}-1)^2-D^*g(x_3)e^{x_1-x_3}Q'(x_3)(e^{x_3}-1)\nonumber\\
        &&+Q'(x_3)p(x_3)u(x_2).\label{Vp1*}
    \end{eqnarray}
    Using the Young inequalities
    \begin{align*}
        (e^{x_1}-1)(e^{x_2}-1)\leq \frac{D^*}{4(b+D^*)}(e^{x_1}-1)^2
        +\frac{(b+D^*)}{D^*}(e^{x_2}-1)^2\\
        b(g(x_3)-1)(e^{x_1}-1)\leq \frac{D^*g(x_3)}{4}(e^{x_1}-1)^2
        +\frac{b^2}{D^*g(x_3)}(g(x_3)-1)^2\\
        (e^{x_2}-1)(1-g(x_3))\leq \frac{(b+D^*)^2g(x_3)}{BD^*}(e^{x_2}-1)^2
        +\frac{BD^*(g(x_3)-1)^2}{4(b+D^*)^2g(x_3)},
    \end{align*}
    we get 
    \begin{eqnarray*}
        \dot{V}(x)
        &\leq \frac{1}{g(x_3)}\left[\frac{b^2}{D^*}+\frac{B^2D^*\lambda}{4(b+D^*)^2}\right](g(x_3)-1)^2-B\left[\lambda e^{-x_2}+g(x_3)-(1+\lambda)\frac{(b+D^*)^2}{D^*B}g(x_3)\right](e^{x_2}-1)^2\\
        &-\frac{D^*}{2}g(x_3)(e^{x_1}-1)^2
        -D^* g(x_3)e^{x_1-x_3}Q'(x_3)(e^{x_3}-1)+Q'(x_3)p(x_3)u(x_2).
    \end{eqnarray*}
    And since $-B\lambda e^{-x_2}(e^{x_2}-1)^2<0,$ we obtain
    \begin{eqnarray*}
        \dot{V}(x)
        &\leq& \frac{1}{g(x_3)}\left[\frac{b^2}{D^*}+\frac{B^2D^*\lambda}{4(b+D^*)^2}\right](g(x_3)-1)^2-Bg(x_3)\left[1-(1+\lambda)\frac{(b+D^*)^2}{D^*B}\right](e^{x_2}-1)^2\\
        &&-\frac{D^*}{2}g(x_3)(e^{x_1}-1)^2
        -D^* g(x_3)e^{x_1-x_3}Q'(x_3)(e^{x_3}-1)+Q'(x_3)p(x_3)u(x_2).
    \end{eqnarray*}
    By picking
    \begin{align}
        \Omega=2b^2+\frac{\lambda B^2{D^*}^2}{2(b+D^*)^2}, \quad B=(1+\lambda)\frac{\varphi(b+D^*)^2}{D^*},\label{Omega-B}
    \end{align}
    it follows that
    \begin{eqnarray}
        \dot{V}(x)
        &\leq& \Omega\frac{(g(x_3)-1)^2}{2D^* g(x_3)}-(\varphi-1)(1+\lambda)\frac{(b+D^*)^2 }{D^*}g(x_3)(e^{x_2}-1)^2-\frac{D^*}{2}g(x_3)(e^{x_1}-1)^2\nonumber\\
        &&-D^* g(x_3)e^{x_1-x_3}Q'(x_3)(e^{x_3}-1)+Q'(x_3)p(x_3)u(x_2).\label{Vp2*}
    \end{eqnarray}

The following observation is established in Section \ref{proof-Claim-Th3}.
    
    \begin{claim}\label{Claim-Th3}
        If assumption \textbf{(C)} holds then the quantity
    \begin{align}
        r:=D^*\left(1-\frac{1}{4\varphi(1+\lambda)}\right)-\sup_{\xi\geq 0}\left(\left(b-\frac{\lambda(b+D^*)}{2}\right)\frac{1-g(\xi)}{g(\xi)}+(1+\lambda)\frac{\lambda^2\varphi(b+D^*)^2}{4D^*}\left(\frac{1-g(\xi)}{g(\xi)}\right)^2\right)\label{r}
    \end{align}
    is positive.
    \end{claim}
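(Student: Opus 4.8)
\noindent\emph{Proof strategy.} The plan is to recognize the quantity being supremized in \eqref{r} as exactly the left-hand side of inequality \eqref{Assump-C} after a change of variables, and then to convert the pointwise strict inequality supplied by assumption \textbf{(C)} into strict positivity of $r$ by a compactness argument.

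First I would unwind the definitions in \eqref{g-p-kappa}. Since $g(\xi)=\mu\bigl(S_{\text{in}}/p(\xi)\bigr)/\mu(S^*)$, one has
\[
\frac{1-g(\xi)}{g(\xi)}\;=\;\frac{\mu(S^*)}{\mu\bigl(S_{\text{in}}/p(\xi)\bigr)}-1 .
\]
Next I would examine the map $\xi\mapsto S(\xi):=S_{\text{in}}/p(\xi)=S_{\text{in}}/(\kappa e^{-\xi}+1)$: it is continuous and strictly increasing on $[0,+\infty)$, with $S(0)=S_{\text{in}}/(\kappa+1)=S^*$ (using \eqref{kappa}) and $S(\xi)\to S_{\text{in}}$ as $\xi\to+\infty$, so its range is precisely $[S^*,S_{\text{in}})$.

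Then I would introduce the auxiliary function
\[
\Phi(S):=\Bigl(b-\tfrac{\lambda(b+D^*)}{2}\Bigr)\Bigl(\tfrac{\mu(S^*)}{\mu(S)}-1\Bigr)+(1+\lambda)\tfrac{\lambda^2\varphi(b+D^*)^2}{4D^*}\Bigl(\tfrac{\mu(S^*)}{\mu(S)}-1\Bigr)^2 ,
\]
which is continuous on the compact interval $[S^*,S_{\text{in}}]$ because $\mu\in C^1$ and $\mu>0$ there. By the two previous steps the supremum in \eqref{r} equals $\sup_{S\in[S^*,S_{\text{in}})}\Phi(S)$, and by continuity this equals $\max_{S\in[S^*,S_{\text{in}}]}\Phi(S)=\Phi(S_0)$ for some $S_0\in[S^*,S_{\text{in}}]$. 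Assumption \textbf{(C)} is exactly the assertion $\Phi(S)<D^*\bigl(1-\tfrac{1}{4(1+\lambda)\varphi}\bigr)$ for all $S\in[S^*,S_{\text{in}}]$, so in particular $\Phi(S_0)<D^*\bigl(1-\tfrac{1}{4(1+\lambda)\varphi}\bigr)$; plugging this into \eqref{r} yields $r>0$.

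The only delicate point is that assumption \textbf{(C)} gives merely a \emph{pointwise} strict inequality over the half-open image $\{S(\xi):\xi\ge0\}=[S^*,S_{\text{in}})$, whereas $r$ contains a supremum; passing to the closure $[S^*,S_{\text{in}}]$, on which $\Phi$ is continuous and hence attains its maximum, is what preserves strictness. Everything else is routine bookkeeping with the definitions \eqref{g-p-kappa}.
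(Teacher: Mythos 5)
Your proof is correct and follows essentially the same route as the paper: rewrite the supremand via $\tfrac{1-g(\xi)}{g(\xi)}=\tfrac{\mu(S^*)}{\mu(S(\xi))}-1$ with $S(\xi)=S_{\text{in}}/p(\xi)$ ranging over $[S^*,S_{\text{in}})$, recognize the left side of \eqref{Assump-C}, and conclude $r>0$. The only thing you make explicit that the paper leaves implicit (it merely asserts that \eqref{Assump-C} is ``equivalent to'' the supremum form) is the compactness step: since $\Phi$ is continuous on the closed interval $[S^*,S_{\text{in}}]$ its maximum is attained there, so the pointwise strict inequality of assumption \textbf{(C)} does propagate to strictness of the supremum.
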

    Picking any constant $c>0$ for which
    \begin{align}
        \left(b+\frac{\lambda(b+D^*)}{2}\right)\max\left\{\frac{|g(x_3)-1|}{g(x_3)}: \ x_3\geq 0\right\}+2D^*<re^{c},\label{c*}
    \end{align} 
    we distinguish the following cases.
    \begin{enumerate}
        \item[]Case $1:$ $x_3\leq 0.$ In view of \eqref{x_1-x_2-x_3}, this case corresponds to $S\leq S^*$ which means the biomass is underfed. In this case we obtain from \eqref{u*} and \eqref{Vp2*}:
        \begin{eqnarray*}
        \dot{V}(x)
        &\leq& \Omega\frac{(g(x_3)-1)^2}{2D^* g(x_3)}(\varphi-1)(1+\lambda)\frac{(b+D^*)^2 }{D^*}g(x_3)(e^{x_2}-1)^2-\frac{D^*}{2}g(x_3)(e^{x_1}-1)^2\\
        &&-D^* g(x_3)e^{x_1-x_3}Q'(x_3)(e^{x_3}-1)+\delta Q'(x_3)p(x_3)|g(x_3)-1|^{1+\alpha}.
    \end{eqnarray*}
    Using \eqref{Q'*}, it holds that
    \begin{eqnarray*}
        \dot{V}(x)
        &\leq& -(\varphi-1)(1+\lambda)\frac{(b+D^*)^2 }{D^*}g(x_3)(e^{x_2}-1)^2-\frac{D^*}{2}g(x_3)(e^{x_1}-1)^2\\
        &&-D^* g(x_3)e^{x_1-x_3}x_3(e^{x_3}-1)-\frac{\Omega e^{x_1-x_3}}{2\delta p(x_3)}|g(x_3)-1|^{1-\alpha}(1-e^{x_3})\\
        &&+\delta x_3 p(x_3)|g(x_3)-1|^{1+\alpha}-\frac{\Omega}{2D^*}\frac{(g(x_3)-1)^2}{g(x_3)}.
    \end{eqnarray*}
Since
    \begin{align*}
        x_3\leq 0,\quad p(x_3)>0, \quad \text{and}\quad -\frac{\Omega}{2D^*}\frac{(g(x_3)-1)^2}{g(x_3)}\leq 0,
    \end{align*} 
    it follows that
        \begin{eqnarray}
            \dot{V}(x)&\leq& -(\varphi-1)(1+\lambda)\frac{(b+D^*)^2}{D^*}g(x_3)(e^{x_2}-1)^2-\frac{D^*}{2}g(x_3)(e^{x_1}-1)^2\nonumber\\
            &&-\left[D^*g(x_3)|x_3|+\frac{\Omega}{2\delta p(x_3)}|g(x_3)-1|^{1-\alpha}\right]e^{x_1}(e^{-x_3}-1).\label{Case1*}
        \end{eqnarray}
        \item[]Case $2:$ $x_3>0$ and $x_1\geq -c.$ 
This case corresponds to the current substrate being both {\em in excess of the equilibrium level}, $S>S^*$, and {\em providing sufficient nurishnment even if the current biomass $X$ exhibits underpopulation}, $S\geq  S_{\text{in}}-\frac{\mu(S^*)e^c}{D^*}X$. 
        In this case we obtain from \eqref{Q'*}, \eqref{u*},  and \eqref{Vp2*}:
            \begin{eqnarray*}
            \dot{V}(x)&\leq& \frac{\Omega }{2D^*g(x_3)}(g(x_3)-1)^2-(\varphi-1)(1+\lambda)\frac{(b+D^*)^2}{D^*}g(x_3)(e^{x_2}-1)^2\\
            &&-\frac{D^*}{2}g(x_3)(e^{x_1}-1)^2-e^{x_1}\frac{RD^*}{g(x_3)}(e^{x_3}-1)^2.
        \end{eqnarray*}
        So using \eqref{Glipscitz} and the fact that $x_1\geq -c,$ we get
        \begin{eqnarray*}
            \dot{V}(x) &\leq &\frac{\Omega A^2}{2D^*g(x_3)}(e^{x_3}-1)^2-(\varphi-1)(1+\lambda)\frac{(b+D^*)^2}{D^*}g(x_3)(e^{x_2}-1)^2\\
            &&-\frac{D^*}{2}g(x_3)(e^{x_1}-1)^2-e^{-c}\frac{RD^*}{g(x_3)}(e^{x_3}-1)^2.
        \end{eqnarray*}
        Consequently, selecting $R$ large enough that $R>\frac{A^2\Omega e^{c}}{{D^*}^2},$ we get
        \begin{eqnarray}
            \dot{V}(x)&\leq& -(\varphi-1)(1+\lambda)\frac{(b+D^*)^2}{D^*}g(x_3)(e^{x_2}-1)^2-\frac{D^*}{2}g(x_3)(e^{x_1}-1)^2\nonumber\\
            &&-e^{-c}\frac{RD^*}{2g(x_3)}(e^{x_3}-1)^2. \label{Case2*}
        \end{eqnarray}
        \item[]Case $3:$ $x_3>0$ and $x_1<-c.$ 
In the original coordinates, this case corresponds to the current substrate being {\em in excess of the equilibrium level}, $S>S^*$, but nevertheless {\em undernurishing the currently underpopulated biomass $X$}, $S<  S_{\text{in}}-\frac{\mu(S^*)e^c}{D^*}X$. 
In this case we obtain from \eqref{Q'*}, \eqref{u*}, \eqref{Vp1*}:
        \begin{eqnarray}
            \dot{V}(x)&=&(b+D^*)g(x_3)(e^{x_1}-1)(e^{x_2}-1)+b(g(x_3)-1)(e^{x_1}-1)+B\lambda(1-g(x_3))(e^{x_2}-1)\nonumber\\
            &&-B(\lambda e^{-x_2}+g(x_3))(e^{x_2}-1)^2
            -D^*g(x_3)(e^{x_1}-1)^2-\frac{RD^*e^{x_1}}{g(x_3)}(e^{x_3}-1)^2.\label{Vp3*}
        \end{eqnarray}
        Consider the following fact that holds for all $y\in \mathbb{R}$
        \begin{align}
            &(b+D^*)g(x_3)(e^{x_1}-1)y+B\lambda(1-g(x_3))y-Bg(x_3)y^2\leq \frac{(b+D^*)^2}{4B}g(x_3)(e^{x_1}-1)^2+\nonumber\\
            &\frac{\lambda(b+D^*)}{2}(1-g(x_3))(e^{x_1}-1)+\frac{B\lambda^2}{4g(x_3)}(1-g(x_3))^2.\label{Fact}
        \end{align} This is the consequence of the fact that the function $y\in \mathbb{R}\mapsto \omega_1 y-\omega_2 y^2, \ (\omega_2>0)$ reaches its maximum at $y_{\max}=\frac{\omega_1}{2\omega_2}$ where
        \begin{align*}
            \omega_1&:=\left[(b+D^*)g(x_3)(e^{x_1}-1)+B\lambda(1-g(x_3))\right], \\ \omega_2&:=Bg(x_3).
        \end{align*}
        Then, we obtain from \eqref{Fact}, by setting $y=(e^{x_2}-1)$
        \begin{eqnarray*}
            (b+D^*)g(x_3)(e^{x_1}-1)(e^{x_2}-1)+B\lambda(1-g(x_3))(e^{x_2}-1)
            -Bg(x_3)(e^{x_2}-1)^2&\leq&\\
            \frac{(b+D^*)^2}{4B}g(x_3)(e^{x_1}-1)^2+
            \frac{\lambda(b+D^*)}{2}(1-g(x_3))(e^{x_1}-1)+\frac{B\lambda^2}{4g(x_3)}(1-g(x_3))^2
        \end{eqnarray*} and we get from \eqref{Vp3*}:
        \begin{eqnarray*}
            \dot{V}(x)&\leq& \left(b-\frac{\lambda(b+D^*)}{2}\right)(g(x_3)-1)(e^{x_1}-1)
            -\left(D^*-\frac{(b+D^*)^2}{4B}\right)g(x_3)(e^{x_1}-1)^2\\
            &&-B\lambda e^{-x_2}(e^{x_2}-1)^2
            -\frac{RD^*e^{x_1}}{g(x_3)}(e^{x_3}-1)^2+\frac{B\lambda^2}{4g(x_3)}(1-g(x_3))^2.
        \end{eqnarray*}
        Since for all $x_3>0, x_2\in \mathbb{R}, x_1<-c$
        \begin{align*}
            -B\lambda e^{-x_2}(e^{x_2}-1)^2
            -\frac{RD^*e^{x_1}}{g(x_3)}(e^{x_3}-1)^2<0,
        \end{align*} we get
        \begin{eqnarray*}
            \dot{V}(x)&\leq& \left(b-\frac{\lambda(b+D^*)}{2}\right)(g(x_3)-1)(e^{x_1}-1)
            -\left(D^*-\frac{(b+D^*)^2}{4B}\right)g(x_3)(e^{x_1}-1)^2\\
            &&+\frac{B\lambda^2}{4g(x_3)}(1-g(x_3))^2. 
        \end{eqnarray*}
        So, using \eqref{Omega-B}, we obtain
        \begin{eqnarray}
            \frac{\dot{V}(x)}{g(x_3)}&\leq&\left(b-\frac{\lambda(b+D^*)}{2}\right)\frac{g(x_3)-1}{g(x_3)}(e^{x_1}-1)-D^*\left(1-\frac{1}{4(1+\lambda)\varphi}\right)(e^{x_1}-1)^2\nonumber\\
            &&+(1+\lambda)\frac{\lambda^2\varphi(b+D^*)^2}{4D^*}\left(\frac{g(x_3)-1}{g(x_3)}\right)^2\nonumber\\
            &=& \left(b-\frac{\lambda(b+D^*)}{2}\right)\frac{1-g(x_3)}{g(x_3)}+(1+\lambda)\frac{\lambda^2\varphi(b+D^*)^2}{4D^*}\left(\frac{1-g(x_3)}{g(x_3)}\right)^2\nonumber\\
            &&-D^*\left(1-\frac{1}{4(1+\lambda)\varphi}\right)+e^{x_1}\left(\left(b-\frac{\lambda(b+D^*)}{2}\right)\frac{g(x_3)-1}{g(x_3)}\right.\nonumber\\
            &&\left.+2D^*\left(1-\frac{1}{4(1+\lambda)\varphi}\right)-D^*e^{x_1}\left(1-\frac{1}{4(1+\lambda)\varphi}\right)\right).\label{Vp4x}
        \end{eqnarray}
    Let us set
        \begin{align}
            P(x_3):=\left(b-\frac{\lambda(b+D^*)}{2}\right)\frac{1-g(x_3)}{g(x_3)}
            +(1+\lambda)\frac{\lambda^2\varphi(b+D^*)^2}{4D^*}\left(\frac{1-g(x_3)}{g(x_3)}\right)^2.\label{P(X_3)}
        \end{align}
        Then we rewrite \eqref{Vp4x} as follows
        \begin{eqnarray}
           \frac{\dot{V}(x)}{g(x_3)}&\leq& P(x_3) -D^*\left(1-\frac{1}{4(1+\lambda)\varphi}\right)+e^{x_1}\left(\left(b-\frac{\lambda(b+D^*)}{2}\right)\frac{g(x_3)-1}{g(x_3)}\right.\nonumber\\
           &&\left.+2D^*\left(1-\frac{1}{4(1+\lambda)\varphi}\right)-D^*e^{x_1}\left(1-\frac{1}{4(1+\lambda)\varphi}\right)\right).\label{Vp4*}
        \end{eqnarray}
        So thanks to the fact that
        \begin{align*}
            1-\frac{1}{4(1+\lambda)\varphi}<1, \quad -e^{x_1}D^*\left(1-\frac{1}{4(1+\lambda)\varphi}\right)<0,
        \end{align*} we get from \eqref{Vp4*} that
        \begin{align}
          &\frac{\dot{V}(x)}{g(x_3)}\leq P(x_3)-D^*\left(1-\frac{1}{4(1+\lambda)\varphi}\right)+e^{x_1}\left(\left(b-\frac{\lambda(b+D^*)}{2}\right)\frac{g(x_3)-1}{g(x_3)}+2D^*\right).\label{Vp4**}  
        \end{align}
        Definition \eqref{P(X_3)} in conjunction with \eqref{r} implies that 
        \begin{align*}
            P(x_3)-D^*\left(1-\frac{1}{4(1+\lambda)\varphi}\right)\leq -r.
        \end{align*}
        Consequently, we get from \eqref{Vp4**} and the fact that $x_1\leq -c:$
        \begin{align}
            &\dot{V}(x)\leq -rg(x_3)+e^{-c}g(x_3)\left(\left(b+\frac{\lambda(b+D^*)}{2}\right)\frac{|g(x_3)-1|}{g(x_3)}+2D^*\right).\label{Vp5*}
        \end{align}
        Hence  \eqref{c*} in conjunction with \eqref{Vp5*}, show that 
        \begin{align}
            \dot{V}(x)<0.\label{Case3*}
        \end{align}
    \end{enumerate}
    Combining all cases, we conclude from \eqref{Case1*}, \eqref{Case2*} and \eqref{Case3*} that $\dot{V}(x)<0$ for all $x\in \mathbb{R}^3\setminus\{0\}.$ Therefore, by Lyapunov's theorem $0\in \mathbb{R}^3$ is globally asymptotically stable  for the closed-loop system \eqref{systeme-age-stud} with \eqref{feedback2}.

    If $\alpha\in (0,1),$ the function $u$ defined in \eqref{u*} is continuously differentiable as stated in the proof of Theorem \ref{Th1}. 
    Using \eqref{g}, \eqref{p}, \eqref{u*}, the Jacobian matrix of the closed-loop system \eqref{syst-age-x1}-\eqref{syst-age-x2}-\eqref{x_3-age-close} is given by 
    \begin{align*}
        \begin{pmatrix}
            &-D^* \quad &b+D^*\quad &bg'(0)\\
            &0\quad &-2\lambda \quad &-\lambda (b+D^*)g'(0)\\
            &0 \quad &0\quad &-D^*
        \end{pmatrix}.
    \end{align*}
The eigenvalues of this matrix are $-D^*<0$ and $-2\lambda<0$ (recall \eqref{lambda}). So it is Hurwitz and the linearization of the closed-loop system \eqref{syst-age-x1}-\eqref{syst-age-x2}-\eqref{x_3-age-close} is exponentially stable at the origin $0\in \mathbb{R}^3$. Therefore, by using the same arguments as in the proof of Theorem \ref{Th1}, we can conclude that the feedback law \eqref{feedback2} achieves local exponential stabilization of the system \eqref{systeme-age-stud} at $0\in \mathbb{R}^3.$

    \section{Conclusions}\label{Conclusions}
    The global asymptotic stabilization problem with positive-valued feedback is addressed and solved in this paper for chemostat models with non-zero mortality rate. We have proposed feedback laws inspired from the one proposed in \cite{Mailleret2001-A-simple}, \cite{Karafyllis2008-Vector} to achieve global asymptotic stabilization of lumped and age structured-chemostat models. The proposed feedback laws work under conditions that guarantee that the mortality rate is sufficiently small. 
    
    In the lumped case, we have shown that the condition that guarantees existence of a global stabilizer is close to being a necessary condition for a wide class of kinetic equations for the specific growth rate including the case of Haldane kinetics. 
    
    The age-structured model studied in this paper is shown to be equivalent to a three-state chemostat model. The general infinite-dimensional case needs to be studied further. However, we strongly believe that the results provided in the present work can be used as a basis for the study of the general case.

    \textbf{Acknowledgments:} The authors would like to thank Dr Dionysios Theodosis for his help with the phase diagrams shown in Section \ref{Sec-Simulation}.

    \section{Appendix}
    \subsection{Proof of Claim \ref{Claim-Lipschitz}}
     The fact that $\mu$ is $C^1$ on $[0,S_{\text{in}}]$ implies the existence of $L\geq 0$ such that
     \begin{align}
         |\mu(S)-\mu(S^*)|\leq L|S-S^*|, \quad \forall S\in [0,S_{\text{in}}].\label{muL}
     \end{align}
     So in view of \eqref{X-S-1}-\eqref{g-p-kappa}, the inequality \eqref{muL} yields for all $z\in \mathbb{R}$
     \begin{align*}
         |g(z)-1|=\left|\frac{\mu(S)}{\mu(S^*)}-1\right|&\leq \frac{L}{\mu(S^*)}|S-S^*|\\
         &=\frac{Lp_0S^*}{b+D^*}\left|\frac{S_{\text{in}}e^{z}}{S_{\text{in}}-S^*+S^*e^{z}}-1\right|\\
         &=\frac{Lp_0S^*}{b+D^*}\times \frac{1}{1+\frac{1}{\kappa}e^{z}}|e^{z}-1|.
     \end{align*}
     Since $1/\left(1+\frac{1}{\kappa}e^{z}\right)<1$ we get for all $z\in \mathbb{R},$
     \begin{align*}
      |g(z)-1|&\leq   \frac{Lp_0S^*}{b+D^*} |e^{z}-1|\\
      &\leq A|e^{z}-1|,
     \end{align*}
     where \begin{align*}
         A:=\frac{Lp_0S^*}{b+D^*}.
     \end{align*}  

\subsection{Proof of the Claim \ref{Claim-Th1}}

  If assumption \textbf{(A)} is valid, then by continuity of $\mu:\mathbb{R}_+\rightarrow\mathbb{R}_+$ it holds that
 \begin{align*}
     \min_{S\in [S^*, S_{\text{in}}]}\ (p_0\mu(S))>b.
 \end{align*}
 Since from \eqref{eqpt-mu1}, $p_0\mu(S^*)=b+D^*,$ we get that
 \begin{align*}
     (D^*+b)\min_{S\in [S^*, S_{\text{in}}]}\ \mu(S)>b\mu(S^*).
 \end{align*} 
 Consequently, there exists $r\in [0,1)$ such that 
 \begin{align*}
     (rD^*+b)\min_{S\in [S^*, S_{\text{in}}]}\ \mu(S)\geq b\mu(S^*).
 \end{align*}
 Therefore, 
 \begin{align*}
     (rD^*+b) \mu(S)\geq b\mu(S^*) \quad \forall S\in [S^*, S_{\text{in}}].
 \end{align*}
 Noticing that from \eqref{X-S-1} and \eqref{g-p-kappa}, $g(x_2)=\frac{\mu(S)}{\mu(S^*)},$ and $S\in [S^*,S_{\text{in}})$ is equivalent to $x_2\geq 0,$  we get that 
 \begin{align*}
     g(x_2)\geq \frac{b}{b+rD^*} \quad \forall x_2\geq 0,
 \end{align*} which implies
 \begin{align*}
     b\frac{g(x_2)-1}{g(x_2)}\geq -rD^*, \quad \forall x_2\geq 0.
 \end{align*} 
\subsection{Proof of the Claim \ref{Claim-Th3}}\label{proof-Claim-Th3}

 Definition \eqref{g-p-kappa} and \eqref{X-Y-S} show that  \eqref{Assump-C} is equivalent to the following inequality:
     \begin{align}
         \sup_{\xi\geq 0}\left(\left(b-\frac{\lambda(b+D^*)}{2}\right)\frac{1-g(\xi)}{g(\xi)}+(1+\lambda)\frac{\lambda^2\varphi(b+D^*)^2}{4D^*}\left(\frac{1-g(\xi)}{g(\xi)}\right)^2\right)<D^*\left(1-\frac{1}{4\varphi(1+\lambda)}\right).\label{AssumpC-equiv}
     \end{align}
     In view of the definition \eqref{r} of $r,$
     we notice that \eqref{AssumpC-equiv} implies that $r$ is positive. 

     
\bibliography{References.bib}
\bibliographystyle{plain}

\end{document}